\newtheorem{prop}{Proposition}
\newtheorem{thm}{Theorem}
\theoremstyle{definition}
\newtheorem{defn}{Definition}
\newtheorem{example}{Example}
\newtheorem{remark}{Remark}
\newcommand\A{{\mathbb A}}
\newcommand\N{{\mathbb N}}
\newcommand{\Ti}{\Theta}
\newcommand{\om}{{\varpi}}
\newcommand\Z{{\mathbb Z}}
\newcommand\bW{{\mathbb W}}
\newcommand\AS{{\mathfrak S}}
\newcommand\BS{{\mathfrak B}}
\newcommand\CS{{\mathfrak C}}
\newcommand\DS{{\mathfrak D}}
\newcommand\la{\lambda}
\newcommand\Eta{H}
\newcommand\ssm{\smallsetminus}
\newcommand\eqto{\stackrel{\lower1.5pt\hbox{$\scriptstyle\sim\,$}}\to}
\newcommand\ov{\overline}
\newcommand\wt{\widetilde}
\newcommand\dis{\displaystyle}
\DeclareMathOperator{\type}{\mathrm{type}}
\newcommand{\ignore}[1]{}
\begin{document}

\title[Tableau formulas for skew Schubert polynomials]
{Tableau formulas for skew Schubert polynomials}

\date{March 21, 2023}

\author{Harry~Tamvakis} \address{University of Maryland, Department of
Mathematics, William E. Kirwan Hall, 4176 Campus Drive, 
College Park, MD 20742, USA}
\email{harryt@umd.edu}

\subjclass[2010]{Primary 05E05; Secondary 05E14, 14N15}

\begin{abstract}
The skew Schubert polynomials are those which are indexed by skew
elements of the Weyl group, in the sense of \cite{T1}. We obtain
tableau formulas for the double versions of these polynomials in all
four classical Lie types, where the tableaux used are fillings of the
associated skew Young diagram. These are the first such theorems for
symplectic and orthogonal Schubert polynomials, even in the single
case. We also deduce tableau formulas for double Schur, double theta,
and double eta polynomials, in their specializations as double
Grassmannian Schubert polynomials. The latter results generalize the
tableau formulas for symmetric (and single) Schubert polynomials due
to Littlewood (in type A) and the author (in types B, C, and D).
\end{abstract}

\maketitle

\setcounter{section}{-1}

\section{Introduction}

The double Schubert polynomials of Lascoux-Sch\"utzenberger \cite{LS,
  L2} and Ikeda-Mihalcea-Naruse \cite{IMN} represent the (stable)
Schubert classes in the equivariant cohomology ring of complete flag
manifolds, in each of the four classical Lie types. When the indexing
Weyl group element is {\em skew}, in the sense of \cite{T1, T3}, we
call these polynomials -- which are really formal power series in
types B, C, and D -- {\em skew Schubert polynomials}.  The aim here is
to prove tableau formulas for the skew Schubert polynomials, in a type
uniform manner. By definition, each skew signed permutation is
associated with a pair of (typed) partitions $\la\supset \mu$, and our
formulas are sums over tableaux which are fillings of the boxes in the
skew diagram $\la/\mu$.

The skew elements of the symmetric group coincide with the
$321$-avoiding or fully commutative permutations \cite{BJS, S}. Their
Schubert polynomials were named and studied by Lascoux and
Chen-Yan-Yang \cite{CYY}, following the work of Wachs \cite{W} and
Billey-Jockusch-Stanley \cite{BJS} in the single case. The results
here are new even in type A, and provide an alternative to the
formulas in \cite{CYY}, which extends readily to the symplectic and
orthogonal Lie types.

Our theorems specialize to give the first tableau formulas for the
single Schubert polynomials of Billey-Haiman \cite{BH} indexed by skew
signed permutations.  The well known single skew Schur $S$-, $P$-, and
$Q$-functions are not skew Schubert polynomials, and it has been
a longstanding open problem to formulate a theory of such polynomials
in types B, C, and D, even in the fully commutative case. We remark
that the (type A and single) skew Schubert polynomials of
Lenart-Sottile \cite{LeS} are different from the ones found in
\cite{BJS, CYY} and the present paper.

The double {\em Grassmannian Schubert polynomials} are the images of
the double Schur, double theta, and double eta polynomials of
\cite{KL, L1, TW, T5} in the ring of double Schubert polynomials
of \cite{LS, IMN} of the corresponding Lie type. Since the
Grassmannian elements are the most important examples of skew
elements, our results also specialize to obtain formulas for
Grassmannian Schubert polynomials. These expressions in turn
generalize the tableau formulas for single Schur, theta, and eta
polynomials found in \cite{Li} and \cite{T1, T3}, respectively. The
latter objects are the {\em symmetric Schubert polynomials}, where the
symmetry means invariance under the action of the respective Weyl
group, as explained in \cite{T6}.

The simple proof -- which is new, and uniform across the four types --
stems from the raising operator approach to tableau formulas and the
ensuing theory of skew elements of the Weyl group, pioneered in
\cite{T1, T3}.  We also employ the definition of double Schubert
polynomials via the nilCoxeter algebra found in \cite{T2}, which
originates in the work of Fomin-Stanley-Kirillov-Lam \cite{FS, FK,
  La}.  These ingredients combine in a harmonious way to yield
straightforward arguments. In a sequel to this paper, we illustrate
the power of these methods further by extending our results to {\em
skew Grothendieck polynomials}.

This article is organized as follows. Section \ref{wgsp} contains
preliminary material on the relevant Weyl groups and double Schubert
polynomials. The following Sections \ref{tAt}, \ref{tCt}, and
\ref{tDt} deal in a parallel manner with tableau formulas for skew
Schubert polynomials in the Lie types A, C, and D, respectively.

I thank the anonymous referee for a careful reading of the paper and
detailed suggestions which helped to improve the exposition.

\section{Weyl groups and Schubert polynomials}
\label{wgsp}

This section gathers background information on the double Schubert
polynomials for the classical Lie groups due to Lascoux and
Sch\"utzenberger \cite{LS, L2} (in type A) and Ikeda, Mihalcea, and
Naruse \cite{IMN} (in types B, C, and D). We require the definition of
these formal power series using the nilCoxeter algebra of the Weyl
group, which originates in \cite{FS, FK, La}, and was used in
\cite{T1, T2, T3}. The precise way in which the Schubert polynomials
studied here represent the stable equivariant Schubert classes on
complete flag manifolds is explained in \cite{IMN}; see also \cite{T2, T4}.

The Weyl group for the root system of type $\text{B}_n$ or
$\text{C}_n$ is the {\em hyperoctahedral group} $W_n$, which consists
of signed permutations on the set $\{1,\ldots,n\}$. The group $W_n$ is
generated by the transpositions $s_i=(i,i+1)$ for $1\leq i \leq n-1$
and the sign change $s_0(1)=\ov{1}$ (as is customary, we set
$\ov{a}:=-a$ for any $a\geq 1$). The elements of $W_n$ are written as
$n$-tuples $(w_1,\ldots, w_n)$, where $w_i:=w(i)$ for each $i\in
[1,n]$.

There is a natural embedding $W_n\hookrightarrow W_{n+1}$ defined by
adding the fixed point $n+1$, and we let $W_\infty :=\cup_n W_n$. The
{\em length} of an element $w\in W_\infty$, denoted $\ell(w)$, is the
least integer $r$ such that we have an expression $w=s_{a_1} \cdots
s_{a_r}$. The word $a_1\cdots a_r$ is called a {\em reduced word} for
$w$.  The symmetric group $S_n$ is the subgroup of $W_n$ generated by
$s_1,\ldots, s_{n-1}$, and we let $S_\infty:=\cup_n S_n$.

The nilCoxeter algebra $\bW_n$ of $W_n$ is the free unital associative
algebra generated by the elements $\xi_0,\xi_1,\ldots,\xi_{n-1}$
modulo the relations
\[
\begin{array}{rclr}
\xi_i^2 & = & 0 & i\geq 0\ ; \\
\xi_i\xi_j & = & \xi_j\xi_i & |i-j|\geq 2\ ; \\
\xi_i\xi_{i+1}\xi_i & = & \xi_{i+1}\xi_i\xi_{i+1} & i>0\ ; \\
\xi_0\xi_1\xi_0\xi_1 & = & \xi_1\xi_0\xi_1\xi_0.
\end{array}
\]
For every $w\in W_n$, define $\xi_w := \xi_{a_1}\ldots \xi_{a_r}$, where
$a_1\cdots a_r$ is any reduced word for $w$. The elements $\xi_w$ for
$w\in W_n$ form a free $\Z$-basis of $\bW_n$. We denote the
coefficient of $\xi_w\in \bW_n$ in the expansion of the element
$\alpha\in \bW_n$ by $\langle \alpha,w\rangle$.

Let $t$ be an indeterminate and define
\begin{gather*}
A_i(t) := (1+t \xi_{n-1})(1+t \xi_{n-2})\cdots (1+t \xi_i) \ ; \\ 
\tilde{A}_i(t) := (1-t \xi_i)(1-t \xi_{i+1})\cdots (1-t \xi_{n-1}) \ ; \\ 
C(t) := (1+t \xi_{n-1})\cdots(1+t\xi_1)(1+t\xi_0)
(1+t\xi_0)(1+t \xi_1)\cdots (1+t \xi_{n-1}).
\end{gather*}
Suppose that $X=(x_1,x_2,\ldots)$, $Y=(y_1,y_2,\ldots)$, and
$Z=(z_1,z_2,\ldots)$ are three infinite sequences of commuting
independent variables.  For any $\om\in S_n$, the type A Schubert
polynomial $\AS_\om$ is given by
\begin{equation}
\label{dbleA}
\AS_\om(X,Y) := \left\langle 
\tilde{A}_{n-1}(y_{n-1})\cdots \tilde{A}_1(y_1)
A_1(x_1)\cdots A_{n-1}(x_{n-1}), \om\right\rangle.
\end{equation}
Let $C(Z):=C(z_1)C(z_2)\cdots$, and for $w\in W_n$, define the type C Schubert 
polynomial $\CS_w$ -- which is a formal power series in the $Z$ variables -- by
\begin{equation}
\label{dbleC}
\CS_w(Z;X,Y) := \left\langle 
\tilde{A}_{n-1}(y_{n-1})\cdots \tilde{A}_1(y_1)C(Z) A_1(x_1)\cdots 
A_{n-1}(x_{n-1}), w\right\rangle.
\end{equation}
The type C Stanley function $F_w$ of \cite{BH, FK, La} is
given by $F_w(Z):=\left\langle C(Z), w\right\rangle$.  The polynomial
$\CS_w$ is stable under the inclusion of $W_n$ in $W_{n+1}$; it
follows that $\CS_w$ and $\AS_\om$ are well defined for $w\in
W_\infty$ and $\om\in S_\infty$, respectively.

For any $w\in W_\infty$, the type B Schubert polynomial $\BS_w$
satisfies $\BS_w=2^{-s(w)}\CS_w$, where $s(w)$ denotes the number of
indices $i$ such that $w_i<0$. We therefore omit any further
discussion of type B, and will focus on the even orthogonal type D.

The Weyl group $\wt{W}_n$ for the root system $\text{D}_n$ is the
subgroup of $W_n$ consisting of all signed permutations with an even
number of sign changes.  The group $\wt{W}_n$ is an extension of $S_n$
by $s_\Box:=s_0s_1s_0$, an element which acts on the right by
\[
(w_1,w_2,\ldots,w_n)s_\Box=(\ov{w}_2,\ov{w}_1,w_3,\ldots,w_n).
\]
There is a natural embedding $\wt{W}_n\hookrightarrow \wt{W}_{n+1}$ of
Weyl groups defined by adjoining the fixed point $n+1$, and we let
$\wt{W}_\infty := \cup_n \wt{W}_n$. The simple reflections in
$\wt{W}_\infty$ are indexed by the members of the set $\N_\Box
:=\{\Box,1,2,\ldots\}$, and are used to define the length and reduced
words of elements in $\wt{W}_\infty$ as above.

The nilCoxeter algebra $\wt{\bW}_n$ of the group $\wt{W}_n$ is the free unital
associative algebra generated by the elements
$\xi_\Box,\xi_1,\ldots,\xi_{n-1}$ modulo the relations
\[
\begin{array}{rclr}
\xi_i^2 & = & 0 & i\in \N_\Box\ ; \\
\xi_\Box \xi_1 & = & \xi_1 \xi_\Box \\
\xi_\Box \xi_2 \xi_\Box & = & \xi_2 \xi_\Box \xi_2 \\
\xi_i\xi_{i+1}\xi_i & = & \xi_{i+1}\xi_i\xi_{i+1} & i>0\ ; \\
\xi_i\xi_j & = & \xi_j\xi_i & j> i+1, \ \text{and} \ (i,j) \neq (\Box,2).
\end{array}
\]

For any element $w\in \wt{W}_n$, choose a reduced word $a_1\cdots a_r$
for $w$, and define $\xi_w := \xi_{a_1}\ldots \xi_{a_r}$.  As before,
denote the coefficient of $\xi_w\in \wt{\bW}_n$ in the expansion of
the element $\alpha\in \wt{\bW}_n$ in the $\xi_w$ basis by $\langle
\alpha,w\rangle$. Following Lam \cite{La}, define
\[
D(t) := (1+t \xi_{n-1})\cdots (1+t \xi_2)(1+t \xi_1)(1+t \xi_\Box)
(1+t \xi_2)\cdots (1+t \xi_{n-1}).
\]
Let $D(Z):=D(z_1)D(z_2)\cdots$, and for $w\in \wt{W}_n$, define
the type D Schubert polynomial $\DS_w$ by 
\begin{equation}
\label{dbleD}
\DS_w(Z;X,Y) := \left\langle 
\tilde{A}_{n-1}(y_{n-1})\cdots \tilde{A}_1(y_1) D(Z) A_1(x_1)\cdots 
A_{n-1}(x_{n-1}), w\right\rangle.
\end{equation}
The type D Stanley function $E_w$ of \cite{BH, La} is defined by
$E_w(Z):=\left\langle D(Z), w\right\rangle$.  The Schubert polynomial
$\DS_w(Z;X,Y)$ is stable under the natural inclusions
$\wt{W}_n\hookrightarrow \wt{W}_{n+1}$, and hence is well defined for
$w\in \wt{W}_\infty$.

Given any Weyl group elements $u_1,\ldots,u_r,w$, we say that the
product $u_1\cdots u_r$ is a {\em reduced factorization} of $w$ if
$u_1\cdots u_r=w$ and $\ell(u_1)+\cdots + \ell(u_r)=\ell(w)$.

\section{Tableau formula for type A skew Schubert polynomials}
\label{tAt}

\subsection{Grassmannian permutations and partitions}
\label{gpps}

We recall here some standard definitions and notation. A {\em
  partition} $\la=(\la_1,\la_2,\ldots)$ is a weakly decreasing
sequence of nonnegative integers with finite support. The length of
$\la$ is the number of non-zero parts $\la_i$. We identify a partition
$\la$ with its Young diagram of boxes, which are arranged in left
justified rows, with $\la_i$ boxes in the $i$-th row for each $i\geq
1$.  An inclusion $\mu\subset\la$ of partitions corresponds to the
containment relation of their respective diagrams; in this case, the
skew diagram $\la/\mu$ is the set-theoretic difference $\la\ssm\mu$. A
skew diagram is called a {\em horizontal strip} (respectively, {\em vertical
  strip}) if it does not contain two boxes in the same column
(respectively, row).

Fix an integer $m\geq 1$. An element $\om\in S_\infty$ is
$m$-Grassmannian if $\ell(\om s_i)>\ell(\om)$ for all $i\neq m$. This is 
equivalent to the conditions 
\[
\om_1<\cdots < \om_m \quad \mathrm{and} \quad  \om_{m+1}<\om_{m+2}<\cdots.
\]
Every $m$-Grassmannian permutation $\om\in S_\infty$ corresponds to a unique
partition $\la$ of length at most $m$, called the {\em shape}
of $\om$. When the shape $\la$ and $m$ are given, we denote $\om=\om(\la,m)$ by $\om_\la$,
and have $\om_\la(i) = \la_{m+1-i}+i$ for $1\leq i \leq m$.

\subsection{Skew permutations and main theorem}

A permutation $\om\in S_\infty$ is called {\em skew} if there exists
an $m$-Grassmannian permutation $\om_\la$ (for some $m$) and a reduced
factorization $\om_\la = \om\om'$ in $S_\infty$. In this case, the
right factor $\om'$ equals $\om_\mu$ for some $m$-Grassmannian
permutation $\om_\mu$, and we have $\mu\subset\la$.  We say that
$(\la,\mu)$ is a {\em compatible pair} and that $\om$ is associated to
the pair $(\la,\mu)$. There is a 1-1 correspondence between reduced
factorizations $uv$ of $\om$ and partitions $\nu$ with
$\mu\subset\nu\subset\la$ (this is a special case of
\cite[Cor.\ 8]{T1}).  It follows from \cite[Thm.\ 4.2]{S} that the
skew permutations coincide with the fully commutative elements of
$S_\infty$.

\begin{defn}
\label{ddiudef}
We say that a permutation $\om$ is {\em decreasing down to $p$} if
$\om$ has a reduced word $a_1\cdots a_r$ such that $a_1 > \cdots > a_r
\geq p$. We say that $\om$ is {\em increasing up from $p$} if $\om$
has a reduced word $a_1\cdots a_r$ such that $p\leq a_1 < \cdots < a_r$.
\end{defn}

If a permutation $\om$ has a reduced word $a_1\cdots a_r$ such that
$a_1 > \cdots > a_r$, then $a_1$ is the largest integer $i$ such that
$\ell(s_i\om)<\ell(\om)$. It follows by induction on $\ell(\om)$ that
if $\om$ is decreasing down to $p$ or increasing up from $p$, then the
decreasing (respectively, increasing) word $a_1\cdots a_r$ for $\om$ in
Definition \ref{ddiudef} is uniquely determined.

Observe that an $m$-Grassmannian element $s_i\om\in S_n$ satisfies
$\ell(s_i\om)>\ell(\om)$ if and only if $\om=(\cdots i \cdots | \cdots
i+1 \cdots)$, where the vertical line $|$ lies between $\om_m$ and
$\om_{m+1}$. Using this and the relation between $\la$ and $\om_\la$
explained above, one sees that a skew permutation $\om=\om_\la\om_\mu^{-1}$ is decreasing
down to $1$ (respectively, increasing up from $1$) if and only
if $\la/\mu$ is a horizontal (respectively, vertical) strip.

Let $\la$ and $\mu$ be any two partitions of length at most $m$ with
$\mu\subset\la$, and choose $n\geq 1$ such that $\om_\la\in S_n$. Let {\bf P}
denote the ordered alphabet 
\[
(n-1)'<\cdots<1'<1<\cdots<n-1. 
\] 
The symbols $(n-1)',\ldots,1'$ are said to be {\em marked}, while the rest
are {\em unmarked}.

\begin{defn}
An {\em $m$-bitableau} $U$ of shape $\la/\mu$ is a filling of the boxes in $\la/\mu$
with elements of {\bf P} which is weakly increasing along each row and
down each column, such that (i) the marked (respectively, unmarked) entries are strictly
increasing each down each column (respectively, along each row), and (ii) the entries in 
row $i$ lie in the interval $[(\mu_i+m+1-i)',\la_i+m-i]$ for each $i\in [1,m]$.
We define
\[
(xy)^U:= \prod_ix_i^{n'_i}\prod_i (-y_i)^{n_i} 
\]
where $n'_i$ (respectively, $n_i$) denotes the number of times that $i'$
(respectively, $i$) appears in $U$.
\end{defn}

\begin{thm}
\label{Askew} For the skew permutation $\om:=\om_{\la}\om_{\mu}^{-1}$, we have
\begin{equation}
\label{ASteq}
\AS_\om(X,Y)=\sum_U (xy)^U
\end{equation}
summed over all $m$-bitableaux $U$ of shape $\la/\mu$.
\end{thm}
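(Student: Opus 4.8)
The plan is to exploit the factorization of the defining element in the nilCoxeter algebra $\bW_n$ into its $Y$-part and its $X$-part, and then to match the resulting sum over intermediate shapes with the decomposition of a bitableau into its marked and unmarked entries. Write $\Phi := A_1(x_1)\cdots A_{n-1}(x_{n-1})$ and $\Psi := \tilde{A}_{n-1}(y_{n-1})\cdots\tilde{A}_1(y_1)$, so that $\AS_\om(X,Y)=\langle\Psi\,\Phi,\om\rangle$. Since $\xi_v\xi_u=\xi_{vu}$ when $\ell(vu)=\ell(v)+\ell(u)$ and $\xi_v\xi_u=0$ otherwise, extracting the coefficient of $\xi_\om$ yields
\[
\AS_\om(X,Y)=\sum_{\substack{vu=\om\\ \ell(v)+\ell(u)=\ell(\om)}}\langle\Psi,v\rangle\,\langle\Phi,u\rangle .
\]
By the correspondence cited from \cite[Cor.\ 8]{T1}, the reduced factorizations $\om=vu$ are in bijection with the partitions $\nu$ satisfying $\mu\subset\nu\subset\la$, the data attached to $\nu$ being $v=\om_\la\om_\nu^{-1}$ and $u=\om_\nu\om_\mu^{-1}$; both factors are again skew, associated to the compatible pairs $(\la,\nu)$ and $(\nu,\mu)$. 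Thus the sum is reindexed over such $\nu$.

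Next I identify each factor with a one-alphabet tableau generating function. Setting $Y=0$ shows $\langle\Phi,\om_\nu\om_\mu^{-1}\rangle=\AS_{\om_\nu\om_\mu^{-1}}(X)$, and I claim this equals the sum of $\prod_i x_i^{n'_i}$ over fillings of $\nu/\mu$ by the marked alphabet $(n-1)'<\cdots<1'$ that are weakly increasing along rows, strictly increasing down columns, and obey the row-$i$ bound $\ge(\mu_i+m+1-i)'$; symmetrically, $\langle\Psi,\om_\la\om_\nu^{-1}\rangle$ equals the sum of $\prod_i(-y_i)^{n_i}$ over fillings of $\la/\nu$ by the unmarked alphabet $1<\cdots<n-1$ that are weakly increasing down columns, strictly increasing along rows, and obey the row-$i$ bound $\le\la_i+m-i$. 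Both claims follow from the same mechanism: expanding $A_i(x_i)=(1+x_i\xi_{n-1})\cdots(1+x_i\xi_i)$ shows that each factor contributes a strictly decreasing string of generators all $\ge i$, weighted by a power of $x_i$; reading these strings across $i=1,\dots,n-1$ and using the explicit form $\om_\la=(\la_m+1,\dots,\la_1+m,\dots)$ from Section \ref{gpps} converts each reduced subword spelling $\om_\nu\om_\mu^{-1}$ into a marked filling of $\nu/\mu$, with the flag $\mu_i+m+1-i$ emerging from the starting positions dictated by $\om_\mu$ and $\om_\la$. The operators $\tilde{A}_i(y_i)$ are handled in parallel, the sign $-y_i$ and the interchange of the strict and weak conditions reflecting the reversed generator order and the sign $-t$ in $\tilde{A}_i(t)$.

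Finally I recombine. A filling $U$ of $\la/\mu$ is an $m$-bitableau precisely when its marked entries occupy an inner skew shape $\nu/\mu$ and its unmarked entries the complementary shape $\la/\nu$ for a unique $\nu$ with $\mu\subset\nu\subset\la$; conditions (i) and (ii) restrict, on these two regions, to exactly the row, column, and flag conditions of the two one-alphabet generating functions above, and $(xy)^U$ factors as the product of the two weights. Summing over $\nu$ therefore turns $\sum_U(xy)^U$ into the reindexed sum of the first step, proving \eqref{ASteq}. I expect the main obstacle to be the second step: pinning down that the operator coefficients $\langle\Phi,\om_\nu\om_\mu^{-1}\rangle$ and $\langle\Psi,\om_\la\om_\nu^{-1}\rangle$ are the asserted \emph{flagged} skew Schur polynomials, with the precise bounds $\mu_i+m+1-i$ and $\la_i+m-i$ and the correct placement of the strict inequalities. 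This is where the combinatorics of reduced subwords of the $A_i$ and $\tilde{A}_i$ must be converted faithfully into the tableau conditions, and it is the computational heart of the argument.
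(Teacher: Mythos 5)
Your proposal is correct and is essentially the paper's own argument, only regrouped: the paper expands the nilCoxeter product into reduced factorizations $v_{n-1}\cdots v_1u_1\cdots u_{n-1}$ of $\om$ all at once and converts these, via the theory of skew permutations, into chains of partitions $\mu=\la^0\subset\cdots\subset\la^{2n-2}=\la$ (i.e., into the fillings $U$), whereas you first split $\om=vu$ through the intermediate partition $\nu$ using \cite[Cor.\ 8]{T1} and then expand each half -- the same data, differently grouped, with your $\nu$ being the paper's $\la^{n-1}$. The ``computational heart'' you defer is exactly what the paper settles by tracking the left action of the reflections $s_a$ on the Grassmannian permutations interpolating from $\om_\mu$ to $\om_\la$: the leftmost entry of row $i$ is placed by $s_{\mu_i+m+1-i}$ and the rightmost by $s_{\la_i+m-i}$, which together with monotonicity along rows forces precisely the flag bounds you state.
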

\begin{proof}
It follows from formula (\ref{dbleA}) and the remark after Definition \ref{ddiudef} that
\begin{equation}
\label{Asum}
\AS_\om(X,Y) = \sum_{v_{n-1}\cdots v_1 u_1\cdots u_{n-1}=\om} (-y_{n-1})^{\ell(v_{n-1})}\cdots (-y_1)^{\ell(v_1)}
x_1^{\ell(u_1)}\cdots x_{n-1}^{\ell(u_{n-1})}
\end{equation}
where the sum is over all reduced factorizations $v_{n-1}\cdots v_1
u_1\cdots u_{n-1}$ of $\om$ such that $v_p$ is increasing up from $p$
and $u_p$ is decreasing down to $p$ for each $p\in [1,n-1]$. Since the
elements $u_p$ and $v_p$ involved are all skew permutations, such
factorizations correspond to sequences of partitions
\[
\mu = \la^{0} \subset \la^1 \subset \cdots \subset \la^{n-1}
\subset \la^n \subset \cdots \subset \la^{2n-2} =\la
\]
with $\la^i/\la^{i-1}$ a horizontal strip for $1\leq i\leq n-1$ and a
vertical strip for $n\leq i\leq 2n-2$, defined by the equations
$\om_{\la^i}=u_{n-i}\om_{\la^{i-1}}$, for $i\in [1,n-1]$, and
$\om_{\la^i}=v_{i+1-n}\om_{\la^{i-1}}$, for $i\in [n,2n-2]$. Note
that some factors $u_p$ or $v_p$ in the product $v_{n-1}\cdots v_1u_1\cdots
u_{n-1}$ may be trivial, and in this case, the associated skew
diagram $\la^i/\la^{i-1}$ is empty.  We obtain a corresponding filling
$U$ of the boxes in $\la/\mu$ by placing the entry $(n-i)'$ in each
box of $\la^i/\la^{i-1}$ for $1\leq i\leq n-1$ and the entry $i+1-n$
in each box of $\la^i/\la^{i-1}$ for $n\leq i\leq 2n-2$.

Consider the left action of the reflections $s_a$, for $a$ in the
reduced word of $v_{n-1}\cdots v_1 u_1\cdots u_{n-1}$, on the
$m$-Grassmannian permutations going from $\om_\mu$ to $\om_\la$.  Choose
$j\in [1,m]$ and set $e:=\om_\mu(m+1-j)$ and $f:=\om_\la(m+1-j)$. The leftmost
entry $h_1$ of $U$ in row $j$ of $\la/\mu$ was added by the reflection
$s_e$, hence we must have $h_1\geq e'$. Similarly, the rightmost entry
$h_2$ in row $j$ was added by $s_{f-1}$, therefore we must have
$h_2\leq f-1$. Since the entries of $U$ are clearly weakly increasing
along row $j$, they all lie in the interval $[e',f-1]$. It follows
that $U$ is an $m$-bitableau of shape $\la/\mu$ such that
$(xy)^U=(-y_{n-1})^{\ell(v_{n-1})}\cdots
(-y_1)^{\ell(v_1)}x_1^{\ell(u_1)}\cdots
x_{n-1}^{\ell(u_{n-1})}$. Conversely, the $m$-bitableaux $U$ of shape
$\la/\mu$ correspond to reduced factorizations of $\om$ as in
(\ref{Asum}).  Since the sum in equation (\ref{ASteq}) is over all
such $U$, the result follows.
\end{proof}

\begin{example}
For any $r\geq 1$, we let $X_r:=(x_1,\ldots,x_r)$ and
$Y_r:=(y_1,\ldots,y_r)$.  Let $\om$ be an $m$-Grassmannian permutation
and $\la$ be the corresponding partition. The Schubert polynomial
$\AS_{\om}(X,Y)$ is equal to the {\em double Schur polynomial}
$s_\la(X_m,Y)$, while $s_\la(X_m):=s_\la(X_m,0)$ is the corresponding
single Schur polynomial.  Equation (\ref{ASteq}) in this case reads
\begin{equation}
\label{Slaeq}
s_\la(X_m,Y) = \sum_U (xy)^U
\end{equation}
summed over all fillings $U$ of the boxes in $\la$ with elements of
{\bf P} which are weakly increasing along each row and down each
column, such that the marked (respectively, unmarked) entries are strictly
increasing each down each column (respectively, along each row), and the
entries in row $i$ lie in the interval $[m',\la_i+m-i]$ for $1\leq
i\leq m$.  The reader may compare (\ref{Slaeq}) with the similar result
in \cite[Prop.\ 4.1]{M}.

Equation (\ref{Slaeq}) implies the known formula (see e.g.\ \cite[Prop.\ 4.1]{K})
\[
s_\la(X_m,Y) = \sum_{\mu\subset \la}
s_\mu(X_m)\det\left(e_{\la_i-\mu_j-i+j}(-Y_{\la_i+m-i})\right)_{1\leq i,j\leq m},
\]
where $e_p(-Y_r)$ denotes the $p$-th elementary symmetric polynomial
in $-y_1,\ldots,-y_r$.  Indeed, the marked entries in each
$m$-bitableau $U$ on $\la$ form a filling of a diagram $\mu$ contained
in $\la$. The map $i'\mapsto m+1-i$ shows that these fillings are in
bijection with semistandard Young tableaux $T$ of shape $\mu$ with
entries in $[1,m]$. For each fixed partition $\mu\subset \la$, the
corresponding monomials $x^T$ sum to give $s_\mu(X_m)$, a polynomial
which is {\em symmetric} in the variables $X_m$. The rest follows from
the determinantal formula for flagged skew Schur functions given in
\cite[Thm.\ $3.5^*$]{W}.
\end{example}

\section{Tableau formula for type C skew Schubert polynomials}
\label{tCt}

\subsection{Grassmannian elements and $k$-strict partitions}
\label{GskC}

The main references for this subsection are \cite{BKT1, BKT2, T1}.
Fix a nonnegative integer $k$.  An element $w\in W_\infty$ is
$k$-Grassmannian if $\ell(ws_i)>\ell(w)$ for all $i\neq k$. This is
equivalent to the conditions
\[
0<w_1<\cdots < w_k \quad \mathrm{and} \quad  w_{k+1}<w_{k+2}<\cdots.
\]
A partition $\la$ is said to be {\em $k$-strict} if no part greater
than $k$ is repeated. The number of parts $\la_i$ which are greater
than $k$ is denoted by $\ell_k(\la)$.

Each $k$-Grassmannian element $w$ of $W_\infty$ corresponds to a
unique $k$-strict partition $\la$, called the {\em shape} of $w$. If
the shape $\la$ is given, then we denote the corresponding element
$w=w(\la,k)$ by $w_\la$.  To describe this bijection, let
$\gamma_1\leq\cdots \leq \gamma_k$ be the lengths of the first $k$
columns of $\la$, listed in increasing order. The sequence
$(\gamma_1,\ldots,\gamma_k)$ is the {\em A-code} of $w_\la$, following
\cite[Def.\ 2]{T6}. We then have
\[
w_\la(j)=\gamma_j+j-\#\{p\in [1,\ell_k(\la)]\, :\, \la_p+p > \gamma_j+j+k\}
\]
for $1\leq j \leq k$, while the equalities $w_\la(k+i)=k-\la_i$ for
$1\leq i \leq \ell_k(\la)$ specify the negative entries of
$w_\la$. For example, the $3$-strict partition $\lambda = (8,4,2,1)$
satisfies $\ell_k(\la)=2$ and $(\gamma_1,\gamma_2,\gamma_3)=(2,3,4)$,
therefore $w_\la=(2,4,7,\ov{5},\ov{1},3,6)$.  The correspondence
between $w$ and its shape $\la$ can be visualized using the notion of
related and non-related diagonals; see \cite[Sec.\ 6.1]{BKT2}.

 We denote the box in row $r$ and column $c$ of a Young diagram by
 $[r,c]$.  For any partition $\la$, we define $\la_0:=\infty$ and
 agree that the diagram of $\la$ includes all boxes $[0,c]$ in row
 zero.  The {\em rim} of $\la$ is the set of boxes $[r,c]$ of its
 Young diagram such that box $[r+1,c+1]$ lies outside of the diagram
 of $\la$. We say that the boxes $[r,c]$ and $[r',c']$ are {\em
   $k'$-related} if $|c-k-\frac{1}{2}|+r = |c'-k-\frac{1}{2}|+r'$. For
 instance, the two grey boxes in the figure below are $k'$-related.
 We call the box $[r,c]$ a {\em left box} if $c \leq k$ and a {\em
   right box} if $c>k$.

\[
\includegraphics[scale=0.60]{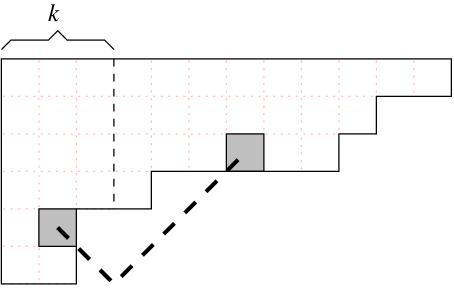}
\]

Following \cite[Sec.\ 5.3]{T1}, if $\mu\subset\la$ are two $k$-strict
partitions, we let $R$ (respectively, $\A$) denote the set of right
boxes of $\mu$ (including boxes in row zero) which are bottom boxes of
$\la$ in their column and are (respectively, are not) $k'$-related to a
left box of $\la/\mu$. The pair $\mu\subset\la$ forms a {\em
  $k$-horizontal strip} $\la/\mu$ if (i) $\la/\mu$ is contained in the
rim of $\la$, and the right boxes of $\la/\mu$ form a horizontal
strip; (ii) no two boxes in $R$ are $k'$-related; and (iii) if two
boxes of $\la/\mu$ lie in the same column, then they are $k'$-related
to exactly two boxes of $R$, which both lie in the same row. 

Note that a $k$-horizontal strip $\la/\mu$ is a pair of partitions
$(\la,\mu)$, and so, depends on $\la$ and $\mu$ and not only on the
difference $\la\ssm\mu$.  We say that two boxes in $\A$ are connected
if they share a vertex or an edge, and let $n(\la/\mu)$ denote the
number of connected components of $\A$ which do not have a box in
column $k+1$.

\subsection{Skew elements and main theorem}
\label{ktmtC}

Following \cite[Sec.\ 6.3]{T1}, an element $w\in W_\infty$ is called
{\em skew} if there exists a $k$-Grassmannian element $w_\la$ (for
some $k$) and a reduced factorization $w_\la = ww'$ in $W_\infty$. In
this case, the right factor $w'$ equals $w_\mu$ for some $k$-strict
partition $\mu$ with $\mu\subset\la$. We say that $(\la,\mu)$ is a
{\em compatible pair} and that $w$ is associated to the pair
$(\la,\mu)$. According to \cite[Cor.\ 8]{T1}, there is a bijection
between reduced factorizations $uv$ of $w_\la w_\mu^{-1}$ and
$k$-strict partitions $\nu$ with $\mu\subset\nu\subset\la$ such that
$(\la,\nu)$ and $(\nu,\mu)$ are compatible pairs. Moreover, any
$k$-horizontal strip $\la/\mu$ is a compatible pair $(\la,\mu)$ of
$k$-strict partitions.

\begin{remark}
The integer $k$, the compatible pair $(\la,\mu)$, and the skew shape
$\la/\mu$ associated to a skew element $w\in W_\infty$ are not
uniquely determined by $w$. For example, the $2$-Grassmannian element
$s_1s_2\in W_3$ is also a skew element when $k=1$, and in the latter
capacity is associated to both of the compatible pairs $((4,1),3)$ and
$((4,3),(3,2))$.
\end{remark}

An element of $W_\infty$ is called {\em unimodal} if it has a
reduced word $a_1\cdots a_r$ such that for some $q\in [0,r]$, we have
$a_1>a_2>\cdots>a_q<a_{q+1}<\cdots<a_r$.  Let $\la$ and $\mu$ be any
two $k$-strict partitions such that $(\la,\mu)$ is a compatible pair,
choose an integer $n\geq 1$ such that $w_\la\in W_n$, and let
$w:=w_\la w_\mu^{-1}$ be the corresponding skew element of $W_n$. It
was shown in \cite[Prop.\ 5]{T1} that $w$ is unimodal if and only if
$\la/\mu$ is a $k$-horizontal strip; we note that this can also be 
checked directly.

\begin{defn}
\label{xydef}
Suppose that $w=w_\la w_\mu^{-1}$ lies in $S_n$. If $w$ is decreasing
down to $1$ (respectively, increasing up from $1$), then we say that the
$k$-horizontal strip $\la/\mu$ is an {\em $x$-strip} (respectively, {\em
  $y$-strip}).
\end{defn}

The $x$- and $y$-strips are characterized among all $k$-horizontal strips as follows.

\begin{prop}
\label{khoriz}
A $k$-horizontal strip $\la/\mu$ is an $x$-strip (respectively, $y$-strip) if
and only if $\ell_k(\la)=\ell_k(\mu)$, the left boxes in $\la/\mu$
form a vertical strip (respectively, horizontal strip), and no two boxes in
$\la/\mu$ are $k'$-related (respectively, no two right boxes in $\la/\mu$ are
in the same row).
\end{prop}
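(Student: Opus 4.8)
The plan is to characterize $x$-strips and $y$-strips by unwinding Definition \ref{xydef} through the combinatorics established in Section \ref{GskC}, treating the two cases by a symmetry between ``decreasing down to $1$'' and ``increasing up from $1$.'' Since we already know from \cite[Prop.\ 5]{T1} that a $k$-horizontal strip corresponds exactly to a unimodal element $w=w_\la w_\mu^{-1}$, and a decreasing (resp.\ increasing) word is a degenerate unimodal word with $q=r$ (resp.\ $q=0$), the task reduces to identifying which geometric features of $\la/\mu$ force the unimodal word of $w$ to be purely decreasing or purely increasing. First I would recall the precise relationship between the reduced word of $w$ and the box-adding process along a chain $\mu=\nu^0\subset\nu^1\subset\cdots\subset\la$ of compatible pairs, using the bijection of \cite[Cor.\ 8]{T1} between reduced factorizations of $w$ and such chains; each covering step $\nu^{i-1}\subset\nu^i$ adds a single box and is effected by left-multiplication by a simple reflection $s_a$, so the sequence of reflection indices $a$ is the reduced word. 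The key is then to read off how the index $a$ changes as boxes are added, and in particular when the indices are monotone.

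The main mechanism is the analysis already begun in the excerpt: for an $m$-Grassmannian permutation in type A, adding a box in row $j$ via $s_a$ relates $a$ to the entry $w_\la(j)$, and for $k$-Grassmannian elements of $W_\infty$ the analogous statement governs how left boxes versus right boxes of $\la/\mu$ contribute reflections $\xi_a$ with $a>0$ versus the sign-change-related reflection $\xi_0$. I would first treat the condition $\ell_k(\la)=\ell_k(\mu)$, which asserts that $\la/\mu$ contains no box in a column $>k$ that changes the count of long rows; combinatorially this is exactly the condition that forces the reduced word of $w$ to avoid the reflection $s_0$ entirely, so that $w$ lies in $S_n\subset W_n$ (this is implicit in the hypothesis ``$w=w_\la w_\mu^{-1}$ lies in $S_n$'' of Definition \ref{xydef}, which I would show is equivalent to $\ell_k(\la)=\ell_k(\mu)$). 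Once $w\in S_n$, Definition \ref{ddiudef} and the observation following it in Section \ref{gpps} apply: I would translate ``decreasing down to $1$'' and ``increasing up from $1$'' into the horizontal-strip / vertical-strip dichotomy for the \emph{left} boxes, using the remark in the excerpt that for skew permutations $\om=\om_\la\om_\mu^{-1}$, decreasing corresponds to a horizontal strip and increasing to a vertical strip. The subtlety is that here the monotonicity is measured on the left boxes while the overall $k$-horizontal strip condition already controls the right boxes, so the roles of horizontal and vertical get exchanged in passing between the global $k$-strip picture and the left-box picture.

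The hard part will be the two ancillary conditions: for $x$-strips, that no two boxes of $\la/\mu$ are $k'$-related, and for $y$-strips, that no two right boxes lie in the same row. These are the conditions that distinguish the strict monotonicity of the unimodal word from mere unimodality, and they are where the $k'$-relation geometry interacts most delicately with the reflection indices. My approach would be to show that a repeated index in the reduced word of $w$ (which is what obstructs strict monotonicity) corresponds precisely to a pair of $k'$-related boxes in the decreasing case and to a pair of right boxes in the same row in the increasing case; the asymmetry arises because the $k'$-relation is the ``distance'' preserved under the reflection action when indices decrease past the diagonal $c=k+\tfrac12$, whereas sameness of row is the relevant collision when indices increase. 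I expect the cleanest route is to argue both directions at once: given the stated geometric conditions I would exhibit the purely decreasing (resp.\ increasing) word explicitly by ordering the box-additions, and conversely, given such a word, I would show each forbidden configuration would force a non-monotone step, invoking the uniqueness of the decreasing/increasing word noted after Definition \ref{ddiudef}. The balance between getting the left-box condition and the $k'$-related/same-row condition to hold simultaneously, rather than one at the expense of the other, is the real content of the proposition and the step most likely to require care with the \cite[Sec.\ 6.1]{BKT2} description of related diagonals.
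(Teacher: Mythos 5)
Your first step---that $w=w_\la w_\mu^{-1}$ lies in $S_n$ if and only if $\ell_k(\la)=\ell_k(\mu)$---agrees with the paper, and your general framework (chains of compatible pairs via \cite[Cor.\ 8]{T1}, boxes added one at a time by left multiplication by simple reflections) is also the one the paper uses. But your central mechanism rests on a false premise. You assert that what obstructs a unimodal element from being decreasing down to $1$ (resp.\ increasing up from $1$) is a \emph{repeated index} in its reduced word, and you plan to match repeated indices with $k'$-related pairs of boxes (resp.\ right boxes in the same row). The actual obstruction is different: an element of $S_n$ is decreasing down to $1$ (resp.\ increasing up from $1$) if and only if no reduced word contains $i-1,i$ (resp.\ $i,i-1$) as a consecutive subword for some $i\geq 2$, and these forbidden pairs typically consist of \emph{distinct} letters. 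Concretely, in the paper's proof the configuration ``two right boxes of $\la/\mu$ that are $k'$-related'' arises exactly from the element $w=s_{i-1}s_i$, whose unique reduced word $i-1,i$ has no repeated letter; a repeated-index analysis can never detect this configuration, so the hardest of your ``ancillary conditions'' would remain unproved. (A unimodal element can also fail to be monotone in either direction with all letters distinct, e.g.\ $s_2s_1s_3$.)

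The paper's route, which your outline is missing, is a reduction to length-two elements: since an obstruction is a consecutive pair $i-1,i$ or $i,i-1$ in some reduced word, the correspondence of \cite[Cor.\ 8]{T1} allows one to assume $w\in\{s_{i-1}s_i,\,s_is_{i-1}\}$ and classify the associated two-box skew shapes, using the three possible forms of a $k$-Grassmannian element $v$ with $\ell(s_iv)>\ell(v)$. This yields five configurations: for $s_{i-1}s_i$, two left boxes in a row, two right boxes that are $k'$-related, or a left box and a right box that are $k'$-related; for $s_is_{i-1}$, two left boxes in a column, or two right boxes in a row. Note in particular the mixed configuration (a left box $k'$-related to a right box): it is invisible to your proposed division of labor, in which the left boxes are handled by the type A horizontal/vertical dichotomy and the $k'$-relation by counting repeated indices. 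Your claim that ``the overall $k$-horizontal strip condition already controls the right boxes'' is also incorrect: conditions (i)--(iii) defining a $k$-horizontal strip do not prevent two right boxes of $\la/\mu$ from being $k'$-related, and excluding exactly that is part of what Proposition \ref{khoriz} asserts for $x$-strips.
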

\begin{proof}
Let $w:=w_\la w_\mu^{-1}$ be the skew element of $W_n$ associated to
$\la/\mu$.  Then clearly $w\in S_n$ if and only if $s(w_\la)=s(w_\mu)$
if and only if $\ell_k(\la)=\ell_k(\mu)$. An element of $S_n$ is
decreasing down to $1$ (respectively, increasing up from $1$) if and only if
it has no reduced word which contains $i-1, i$ (respectively, $i, i-1$) as a
subword for some $i\geq 2$. Notice that in any reduced factorization
$w=us_{i-1}s_iv$ (respectively, $w=us_is_{i-1}v$), the elements $u$ and $v$
are also skew, and associated to $k$-horizontal strips which are
substrips of $\la/\mu$. Therefore, by induction on the lengths of $u$
and $v$, we may assume that $w\in \{s_{i-1}s_i, s_is_{i-1}\}$ for some
$i$ and study the associated skew diagram $\la/\mu$.  For $i\geq 1$, a
$k$-Grassmannian element $s_iv\in W_n$ satisfies $\ell(s_iv)>\ell(v)$
if and only if $v$ has one of the following three forms:
\[
(\cdots i \cdots | \cdots i+1 \cdots), \quad (\cdots i+1 \cdots |
\cdots \ov{i} \cdots), \quad (\cdots | \cdots \ov{i} \cdots i+1
\cdots)
\]
where the vertical line $|$ lies between $v_k$ and $v_{k+1}$. We
deduce that if $w=s_{i-1}s_i$ for some $i\geq 2$, then $w_\mu$ and
$w_\la$ have the form
\[
w_\mu = (\cdots i-1,\, i \cdots | \cdots i+1 \cdots),  \ \, 
w_\la = (\cdots i, \, i+1 \cdots | \cdots i-1 \cdots)
\]
so that $\la/\mu$ has two left boxes in the same row, or 
\[
w_\mu = (\cdots i+1 \cdots | \cdots \ov{i},\, \ov{i-1} \cdots), \ \, 
w_\la = (\cdots i-1 \cdots | \cdots \ov{i+1},\, \ov{i} \cdots)
\]
so that $\la/\mu$ has two right boxes which are $k'$-related, or 
\[
w_\mu = (\cdots i \cdots | \cdots \ov{i-1}, \, i+1 \cdots), \ \, 
w_\la = (\cdots i+1 \cdots | \cdots \ov{i},\, i-1 \cdots)
\]
in which case $\la/\mu$ has a left box and a right box which are
$k'$-related. On the other hand, if $w=s_is_{i-1}$, then we must have
\[
w_\mu = (\cdots i-1 \cdots | \cdots i, \, i+1 \cdots), \ \,
w_\la = (\cdots i+1 \cdots | \cdots i-1, \, i \cdots)
\]
so that $\la/\mu$ has two left boxes in the same column, or 
\[
w_\mu = (\cdots | \cdots \ov{i-1} \cdots), \ \,
w_\la = (\cdots | \cdots \ov{i+1} \cdots)
\]
so $\la/\mu$ has two right boxes in the same row. Finally, the converse assertions 
are proved by using the correspondence between $\nu$ and $w_\nu$ given in Section
\ref{GskC}. 
\end{proof}

A {\em $k$-tableau} $T$ of shape $\la/\mu$ is a sequence of
 $k$-strict partitions
\[
\mu = \la^0\subset\la^1\subset\cdots\subset\la^p=\la
\]
such that $\la^i/\la^{i-1}$ is a $k$-horizontal strip for $1\leq i\leq
p$.  We represent $T$ by a filling of the boxes in $\la/\mu$ with
positive integers such that for each $i$, the boxes in $T$ with entry
$i$ form the skew diagram $\la^i/\la^{i-1}$. For any $k$-tableau $T$
we define $n(T):=\sum_i n(\la^i/\la^{i-1})$ and set $z^T:=\prod_i
z_i^{m_i}$, where $m_i$ denotes the number of times that $i$ appears
in $T$.  According to \cite[Thm.\ 6]{T1}, the type C Stanley function $F_w(Z)$
of the skew signed permutation $w=w_{\la}w_{\mu}^{-1}$ satisfies the equation
\begin{equation}
\label{Ftabeq}
F_w(Z) = \sum_T 2^{n(T)} z^T
\end{equation}
summed over all $k$-tableaux $T$ of shape $\la/\mu$.

Let {\bf Q} denote the ordered alphabet
\[
(n-1)'<\cdots 2'<1'< 1 < 2 < 3 < \cdots <1''<2''<\cdots<(n-1)''. 
\]
The single and double primed symbols in {\bf Q} are said to be {\em
  marked}, while the rest are {\em unmarked}.

\begin{defn}
\label{Ctrit}
A {\em $k$-tritableau} $U$ of shape $\la/\mu$ is a filling of the
boxes in $\la/\mu$ with elements of {\bf Q} which is weakly increasing
along each row and down each column, such that (i) for each $a$ in
{\bf Q}, the boxes in $\la/\mu$ with entry $a$ form a $k$-horizontal
strip, which is an $x$-strip (respectively, $y$-strip) if $a\in [(n-1)', 1']$
(respectively, $a\in [1'',(n-1)'']$), and (ii) for $1\leq i \leq \ell_k(\mu)$
(respectively, $1\leq i \leq \ell_k(\la)$) and $1\leq j \leq k$, the entries
of $U$ in row $i$ are $\geq (\mu_i-k)'$ (respectively, $\leq (\la_i-k-1)''$)
and the entries in column $k+1-j$ lie in the interval $[(w_\mu(j))',
  (w_\la(j)-1)'']$.  We define
\[
n(U):=n(T) \quad \mathrm{and} \quad
(xyz)^U:= z^T\prod_ix_i^{n'_i}\prod_i (-y_i)^{n''_i} 
\]
where $T$ is the $k$-tableau formed by the unmarked entries in $U$, and 
$n'_i$ and $n''_i$ denote the number of times that $i'$ and $i''$
appear in $U$, respectively.
\end{defn}

\begin{thm}
\label{Cskew} 
For the skew element $w:=w_{\la}w_{\mu}^{-1}$, we have
\begin{equation}
\label{CSteq}
\CS_w(Z;X,Y)=\sum_U 2^{n(U)}(xyz)^U
\end{equation}
summed over all $k$-tritableaux $U$ of shape $\la/\mu$.
\end{thm}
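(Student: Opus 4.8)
The plan is to follow the same strategy that worked for Theorem~\ref{Askew}, namely to expand the defining expression \eqref{dbleC} into a sum over reduced factorizations and then match those factorizations bijectively with $k$-tritableaux. First I would read off from \eqref{dbleC} that
\[
\CS_w(Z;X,Y) = \sum (-y_{n-1})^{\ell(v_{n-1})}\cdots(-y_1)^{\ell(v_1)}\,F_{w_0}(Z)\,x_1^{\ell(u_1)}\cdots x_{n-1}^{\ell(u_{n-1})}
\]
where the sum ranges over reduced factorizations $v_{n-1}\cdots v_1\,w_0\,u_1\cdots u_{n-1}=w$ in which $v_p$ is increasing up from $p$, $u_p$ is decreasing down to $p$, and $w_0$ is the ``middle'' factor whose Stanley function is extracted by $C(Z)$. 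The key structural point, coming from \cite[Cor.\ 8]{T1}, is that every factor here is again a skew element, so each reduced factorization corresponds to a chain of $k$-strict partitions $\mu=\la^0\subset\cdots\subset\la^N=\la$ in which the successive differences are $k$-horizontal strips. By Definition~\ref{xydef} and Proposition~\ref{khoriz}, the strips coming from the $u_p$ factors are exactly $x$-strips and those from the $v_p$ factors are exactly $y$-strips.

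Next I would encode such a chain as a filling: assign the marked symbol $(n-p)'$ to the boxes of the $x$-strip contributed by $u_p$, the marked symbol $p''$ to the boxes of the $y$-strip contributed by $v_p$, and the unmarked positive integers to the $k$-tableau $T$ recording the factorization of the middle element $w_0$ via \eqref{Ftabeq}. The monomial bookkeeping then matches $(xyz)^U$ term by term, and the power of $2$ is supplied by $2^{n(T)}=2^{n(U)}$ from the type C Stanley function formula \eqref{Ftabeq}, since the factor $2^{n(U)}$ attaches only to the unmarked (middle) part of $U$. The constraints (i) in Definition~\ref{Ctrit} on which symbols give $x$- versus $y$-strips are precisely the unimodality/monotonicity conditions on the factors, so they hold automatically for the filling built from a valid factorization.

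The main obstacle, and the step deserving the most care, is verifying condition (ii) of Definition~\ref{Ctrit}: the row and column bounds $\geq(\mu_i-k)'$, $\leq(\la_i-k-1)''$, and the interval $[(w_\mu(j))',(w_\la(j)-1)'']$ for column $k+1-j$. As in the type A proof, I would track the left action of the simple reflections on the $k$-Grassmannian permutations interpolating from $w_\mu$ to $w_\la$, and argue that the first reflection touching a given row/column is determined by the entry $w_\mu(j)$ while the last is determined by $w_\la(j)$, forcing the extremal entries of $U$ into the stated intervals; the weak monotonicity of $U$ then places all entries there. The delicate part is that in type C these bounds involve both the positive entries $w_\mu(j),w_\la(j)$ of the Grassmannian elements and the negative entries governing $\ell_k$, so the row bounds and column bounds must be checked against the explicit description of $w_\la$ in terms of the A-code and the negative values $w_\la(k+i)=k-\la_i$. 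Establishing that the filling conditions in Definition~\ref{Ctrit} are equivalent to admissibility of the factorization—in both directions—is where the real content lies; once that equivalence is in hand, the conversely direction (every $k$-tritableau yields a valid factorization) and the final summation identity \eqref{CSteq} follow formally, just as in Theorem~\ref{Askew}.
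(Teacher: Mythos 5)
Your proposal is correct and follows essentially the same route as the paper's proof: expand (\ref{dbleC}) into reduced factorizations $v_{n-1}\cdots v_1\sigma u_1\cdots u_{n-1}$, identify the strips contributed by the $u_p$ and $v_p$ factors as $x$- and $y$-strips, fill the middle compatible pair with a $k$-tableau via (\ref{Ftabeq}), and pin down the bounds in Definition \ref{Ctrit} by tracking the left action of the reflections taking $w_\mu$ to $w_\la$. One index slip to fix: the $x$-strip contributed by $u_p$ must be labeled $p'$, not $(n-p)'$ (in the paper's chain notation the entry $(n-i)'$ goes in $\la^i/\la^{i-1}$, which is the strip added by $u_{n-i}$), since $u_p$ is paired with $A_p(x_p)$ and the monomial bookkeeping requires $n'_p=\ell(u_p)$.
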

\begin{proof}
It is clear from formula (\ref{dbleC}) that
\begin{equation}
\label{CSw}
\CS_w(Z;X,Y) 
= \sum (-y_{n-1})^{\ell(v_{n-1})}\cdots (-y_1)^{\ell(v_1)}
F_\sigma(Z) x_1^{\ell(u_1)}\cdots x_{n-1}^{\ell(u_{n-1})}
\end{equation}
where the sum is over all reduced factorizations $v_{n-1}\cdots v_1
\sigma u_1\cdots u_{n-1}$ of $w$ such that $v_p\in S_n$ is increasing
up from $p$ and $u_p\in S_n$ is decreasing down to $p$ for each $p$.
Such factorizations correspond to sequences of $k$-strict partitions
\[
\mu = \la^{0} \subset \la^1 \subset \cdots \subset \la^{n-1}
\subset \la^n \subset \la^{n+1} \subset \cdots \subset \la^{2n-1} =\la
\]
with $\la^i/\la^{i-1}$ an $x$-strip for each $i\leq n-1$,
$(\la^n,\la^{n-1})$ a compatible pair, and $\la^i/\la^{i-1}$ a
$y$-strip for each $i\geq n+1$. The partitions $\la^i$ are determined
by the equations $w_{\la^i}=u_{n-i}w_{\la^{i-1}}$, for $i\in [1,n-1]$,
$w_{\la^n}=\sigma w_{\la^{n-1}}$, and
$w_{\la^i}=v_{i-n}w_{\la^{i-1}}$, for $i\in [n+1,2n-1]$. Note that
some factors in the product $v_{n-1}\cdots v_1\sigma u_1\cdots
u_{n-1}$ may be trivial, and in this case, the corresponding skew
diagram $\la^i/\la^{i-1}$ is empty. We extend each $k$-tableau $T$ on
$\la^n/\la^{n-1}$ to a filling $U$ of the boxes in $\la/\mu$ by
placing the entry $(n-i)'$ in each box of $\la^i/\la^{i-1}$ for $1\leq
i\leq n-1$ and $(i-n)''$ in each box of $\la^i/\la^{i-1}$ for $n+1\leq
i\leq 2n-1$.

Consider the left action of the reflections $s_a$ for $a$ in the
reduced word of the product $v_{n-1}\cdots v_1 \sigma u_1\cdots
u_{n-1}$ on the $k$-Grassmannian signed permutations going from
$w_\mu$ to $w_\la$.  The left action of $s_a$ on a $k$-Grassmannian
element $w_\nu$ must be of the form $(\cdots a \cdots | \cdots a+1
\cdots)\mapsto (\cdots a+1 \cdots | \cdots a \cdots)$ or $(\cdots
\ov{a} \cdots)\mapsto (\cdots \ov{a+1} \cdots)$; these add a left box
or a right box to $\nu$, respectively. It follows as in the proof of
Theorem \ref{Askew} that for $1\leq i \leq \ell_k(\mu)$ (respectively, $1\leq
i \leq \ell_k(\la)$), the entries of $U$ in row $i$ are $\geq
(\mu_i-k)'$ (respectively, $\leq (\la_i-k-1)'')$. Observe that when a right
box is added to $\nu$, at most one of the values
$w_\nu(1),\ldots,w_\nu(k)$ will decrease by $1$.  We deduce that for
$1\leq j \leq k$, the top entry $h_1$ of $U$ in column $k+1-j$ of
$\la/\mu$ was added by a reflection $s_a$ where $a\leq w_\mu(j)$,
hence we must have $h_1\geq (w_\mu(j))'$. Similarly, the bottom entry
$h_2$ of $U$ in column $k+1-j$ must satisfy $h_2\leq
(w_\la(j)-1)''$. Since the entries of $U$ are clearly weakly
increasing down column $k+1-j$, they must all be in the interval
$[(w_\mu(j))',(w_\la(j)-1)'']$. It follows that the entries of $U$ in
the first $k$ columns will lie within the intervals which are listed
in Definition \ref{Ctrit}. We deduce that every filling $U$ of
$\la/\mu$ as above is a $k$-tritableau on $\la/\mu$ such that
$(xyz)^U=(-y_{n-1})^{\ell(v_{n-1})}\cdots
(-y_1)^{\ell(v_1)}z^Tx_1^{\ell(u_1)}\cdots
x_{n-1}^{\ell(u_{n-1})}$. Conversely, the $k$-tritableaux of shape
$\la/\mu$ correspond to reduced factorizations of $w$ of the required
form. Finally, by combining (\ref{CSw}) with (\ref{Ftabeq}), we obtain
(\ref{CSteq}).
\end{proof}

\begin{example} 
Following \cite{TW}, for any $k$-strict partition $\la$, there is a
double theta polynomial $\Ti_\la(c\, |\, t)$, whose image $\Ti_\la(Z;
X,Y)$ in the ring of type C Schubert polynomials is equal to the
Grassmannian Schubert polynomial $\CS_{w_\la}(Z; X,Y)$. Formula
(\ref{CSteq}) therefore gives
\begin{equation}
\label{Tieq}
\Ti_\la(Z;X,Y)=\sum_U 2^{n(U)}(xyz)^U
\end{equation}
summed over all $k$-tritableaux $U$ of shape $\la$. Equation
(\ref{Tieq}) extends \cite[Thm.\ 5]{T1} from single to double theta
polynomials.
\end{example}

\begin{example} 
We extend \cite[Example 7]{T1} to include $k$-tritableaux. Let $k:=1$,
$\la:=(3,1)$, and $Z_2:=(z_1,z_2)$. We have $w_\la=(3,\ov{2},1)$ and
will compute $\Ti_{(3,1)}(Z_2;X,Y)= \CS_{3\ov{2}1}(Z_2;X,Y)$. Consider
the alphabet $\text{\bf Q}_{1,2}=\{2'<1'<1<2<1''<2''\}$. The twelve
$1$-tritableaux of shape $\la$ with entries in $\{2',1',1,2\}$ are
listed in loc.\ cit. There are sixteen further $1$-tritableaux of
shape $\la$ involved. The tritableau $U=\dis \begin{array}{l}
  1\,2\,1'' \\ 2 \end{array}$ satisfies $n(U)=3$, the seven
tritableaux
\[
\begin{array}{l} 1\,1\,1'' \\ 2 \end{array}  \ \
\begin{array}{l} 1\,2\,1'' \\ 1 \end{array}  \ \
\begin{array}{l} 1'\,1\,1'' \\ 1 \end{array}  \ \
\begin{array}{l} 1'\,2\,1'' \\ 1 \end{array}  \ \
\begin{array}{l} 1'\,1\,1'' \\ 2 \end{array}  \ \
\begin{array}{l} 1'\,2\,1'' \\ 2 \end{array}  \ \
\begin{array}{l} 1\,2\,1'' \\ 1'' \end{array}  
\]
satisfy $n(U)=2$, while the eight tritableaux
\[
\begin{array}{l} 1\,1\,1'' \\ 1 \end{array}  \ \
\begin{array}{l} 2\,2\,1'' \\ 2 \end{array}  \ \
\begin{array}{l} 1'\,1\,1'' \\ 1' \end{array} \ \
\begin{array}{l} 1'\,2\,1'' \\ 1' \end{array} \ \
\begin{array}{l} 1\,1\,1'' \\ 1'' \end{array}  \ \
\begin{array}{l} 2\,2\,1'' \\ 1'' \end{array}  \ \
\begin{array}{l} 1'\,1\,1'' \\ 1'' \end{array}  \ \
\begin{array}{l} 1'\,2\,1'' \\ 1'' \end{array}  
\]
satisfy $n(U)=1$. 
We deduce from \cite[Example 7]{T1} and Theorem \ref{Cskew} that
\begin{align*}
\Ti_{3,1}(Z_2; X,Y) 
&= \Ti_{3,1}(Z_2; X) -y_1(2z_1^3+8z_1^2z_2+8z_1z_2^2+2z_2^3) \\
&\qquad -y_1(4z_1^2+8z_1z_2+4z_2^2)x_1-y_1(2z_1+2z_2)x_1^2 \\
& \qquad + y_1^2(2z_1^2+4z_1z_2+2z_2^2) + y_1^2(2z_1+2z_2)x_1 \\
&= \Ti_{3,1}(Z_2; X) - y_1\Ti_{2,1}(Z_2; X) + y_1^2 \Ti_2(Z_2; X).
\end{align*}
\end{example}

\begin{remark}
Suppose that $w$ is a skew element of $W_\infty$ associated to the
compatible pair $(\la,\mu)$ of $k$-strict partitions. One may
view the right hand side of (\ref{CSteq}) as a tableau formula for
a `double skew theta polynomial' $\Ti_{\la/\mu}(Z;X,Y)$ indexed by
$\la/\mu$. However, Wilson's double theta polynomials $\Ti_\la(c\,|\,
t)$ from \cite{TW}, like their single versions $\Ti_\la(c)$ in
\cite{BKT2}, are defined in terms of raising operators acting on
monomials in a different set of variables. It remains an open question
to determine an analogue of these raising operator formulas in the
skew case.  A similar remark applies to the theory of single and
double eta polynomials found in \cite{BKT3, T5}. See also Example
\ref{Qex} below, which examines Theorem \ref{Cskew} when $k=0$.
\end{remark}

\begin{example}
\label{Qex}
Suppose that $k=0$ and let $w:=w_{\la}w_{\mu}^{-1}$ be the fully
commutative skew element associated to a pair $\la\supset\mu$ of
$0$-strict partitions.  According to \cite[Thm.\ 6.6]{IMN}, the Schubert
polynomial $\CS_{w_\la}(Z;X,Y)$ is equal to a double analogue
$Q_\la(Z;Y)$ of Schur's $Q$-function introduced by Ivanov \cite{I},
and we also have $\CS_{w_\mu}(Z;X,Y) = Q_\mu(Z;Y)$. However, the skew
Schubert polynomial $\CS_w$ will in general involve both the $X$ and
$Y$ variables. For instance, assume that $\la =
\delta_n:=(n,n-1,\ldots,1)$, so that $w_\la =
w_{\delta_n}=(\ov{n},\ldots,\ov{1})$ is the longest $0$-Grassmannian
element in $W_n$. Since
$w_\mu^{-1}w^{-1}=w_{\delta_n}^{-1}=w_{\delta_n}$, we see that in this
case $w^{-1}=w_{\mu^\vee}$ is the $0$-Grassmannian element with shape
given by the $0$-strict partition $\mu^\vee$ whose parts complement
the parts $\mu_i$ of $\mu$ in the set $\{1,\ldots,n\}$.  Using the
symmetry property of double Schubert polynomials
\cite[Thm.\ 8.1]{IMN}, we conclude that $\CS_w(Z;X,Y) =
\CS_{w^{-1}}(Z;-Y,-X)= Q_{\mu^\vee}(Z;-X)$. It is an instructive
exercise to deduce this equality from the tableau formula
(\ref{CSteq}).
\end{example}

\section{Tableau formula for type D skew Schubert polynomials}
\label{tDt}

\subsection{Grassmannian elements and typed $k$-strict partitions}
\label{Getks}

The main references for this subsection are \cite{BKT1, BKT3, T3, T7}.
According to \cite[Def.\ 1]{T7}, we say that $w\in \wt{W}_\infty$ has
type $0$ if $|w_1|=1$, type $1$ if $w_1>1$, and type 2 if $w_1<-1$.
Fix a positive integer $k$. An element $w\in \wt{W}_\infty$ is
$k$-Grassmannian if $\ell(ws_i)>\ell(w)$ for all $i\neq k$, if $k>1$,
and for all $i>1$, if $k=1$.  This is equivalent to the conditions
\[
|w_1|<\cdots < w_k \quad \mathrm{and} \quad  w_{k+1}<w_{k+2}<\cdots
\]
with the first condition being vacuous if $k=1$. Following \cite{T7}, we
regard the $\Box$-Grassmannian elements as a subset of the
$1$-Grassmannian elements.

A {\em typed $k$-strict partition} is a pair consisting of a
$k$-strict partition $\la$ together with an integer $\type(\la)\in
\{0,1,2\}$, which is positive if and only if $\la_i=k$ for some index
$i$.  There is a type-preserving bijection between the
$k$-Grassmannian elements $w$ of $\wt{W}_\infty$ and typed $k$-strict
partitions $\la$. If the element $w$ corresponds to the typed
partition $\la$, then we denote $w=w(\la,k)$ by $w_\la$.

Given a typed $k$-strict partition $\la$, the bijection is determined
as follows.  Let $\ell_k(\la)$ and $\gamma_1\leq\cdots \leq \gamma_k$
be defined as in Section \ref{GskC}. If $\type(\la)\neq 2$, then 
\[
w_\la(j)=\gamma_j+j-\#\{p\in [1,\ell_k(\la)] \, :\, \la_p +p \geq \gamma_j+j+k\}
\]
for $1\leq j \leq k$, while the equalities $w_\la(k+i)=k-1-\la_i$ for
$1\leq i \leq \ell_k(\la)$ give the negative entries of $w_\la$ which
are less than $-1$.  If $\type(\la)=2$ and $\la'$ is the partition of
type 1 with the same shape as $\la$, then $w_\la$ is related to
$w_{\la'}=(w'_1,\ldots,w'_n)$ by changing the sign of the first entry
$w'_1$ and of the entry $w'_p$ with $|w'_p|=1$.  For example, the
typed $3$-strict partition $\lambda = (7,4,3,2)$ of type $2$ satisfies
$\ell_k(\la)=2$ and $(\gamma_1,\gamma_2,\gamma_3)=(3,4,4)$, therefore
$w_\la=(\ov{3},6,7,\ov{5},\ov{2},\ov{1},4,8)$. We refer to
\cite[Sec.\ 6.1]{BKT3} for a picture of the correspondence between $w$
and $\la$ which uses related and non-related diagonals.

We say that the boxes $[r,c]$ and $[r',c']$ in a Young diagram are
{\em $(k-1)$-related} if $|c-k|+r = |c'-k|+r'$.  For instance, the two
grey boxes in the figure below are $(k-1)$-related. We call the box
$[r,c]$ a {\em left box} if $c \leq k$ and a {\em right box} if $c>k$.

\[
\includegraphics[scale=0.60]{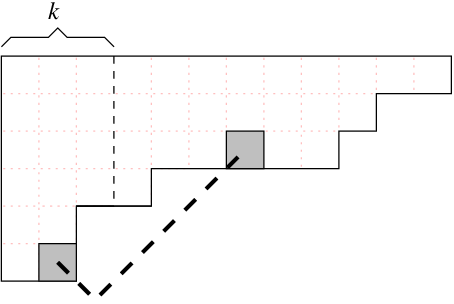}
\]

\medskip

Following \cite[Sec.\ 2.6 and 3.4]{T3}, if $\mu\subset\la$ are two
typed $k$-strict partitions, we let $R$ (respectively, $\A$) denote the set
of right boxes of $\mu$ (including boxes in row zero) which are bottom
boxes of $\la$ in their column and are (respectively, are not)
$(k-1)$-related to a left box of $\la/\mu$.  A pair $\mu\subset\la$ of
typed $k$-strict partitions forms a {\em typed $k'$-horizontal strip}
$\la/\mu$ if $\type(\la)+\type(\mu)\neq 3$ and (i) $\la/\mu$ is
contained in the rim of $\la$, and the right boxes of $\la/\mu$ form a
horizontal strip; (ii) no two boxes in $R$ are $(k-1)$-related; and
(iii) if two boxes of $\la/\mu$ lie in the same column, then they are
$(k-1)$-related to exactly two boxes of $R$, which both lie in the
same row.  We define $n'(\la/\mu)$ to be one less than the number of
connected components of $\A$.

\subsection{Skew elements and main theorem}

Following \cite[Sec.\ 4.3]{T3}, an element $w\in \wt{W}_\infty$ is
called {\em skew} if there exists a $k$-Grassmannian element $w_\la$
(for some $k\geq 1$) and a reduced factorization $w_\la = ww'$ in
$\wt{W}_\infty$. In this case, the right factor $w'$ equals $w_\mu$
for some $k$-Grassmannian element $w_\mu$, and we have
$\mu\subset\la$. We say that $(\la,\mu)$ is a {\em compatible pair}
and that $w$ is associated to the pair $(\la,\mu)$. According to
\cite[Cor.\ 2]{T3}, there is a 1-1 correspondence between reduced
factorizations $uv$ of $w_\la w_\mu^{-1}$ and typed $k$-strict
partitions $\nu$ with $\mu\subset\nu\subset\la$ such that $(\la,\nu)$
and $(\nu,\mu)$ are compatible pairs. Any typed $k'$-horizontal strip
$\la/\mu$ is an example of a compatible pair $(\la,\mu)$ of typed
$k$-strict partitions.

Let $\la$ and $\mu$ be any two typed $k$-strict partitions such that
$(\la,\mu)$ is a compatible pair, choose an integer $n\geq 1$ such
that $w_\la\in \wt{W}_n$, and let $w:=w_\la w_\mu^{-1}$ be the
corresponding skew element of $\wt{W}_n$.  It was shown in \cite{T3}
that $w$ is unimodal (in the sense of Section \ref{ktmtC}) if and only
if $\la/\mu$ is a typed $k'$-horizontal strip.

\begin{defn}
We say that a typed $k'$-horizontal strip $\la/\mu$ is {\em extremal} if
\[
(\ell_k(\la),\type(\la))\neq (\ell_k(\mu),\type(\mu)).
\]  
For any typed $k$-strict partition $\nu$, let
$\epsilon(\nu):=\ell_k(\nu)+\type(\nu)$. 
\end{defn}

\begin{defn} 
\label{xyDdef}
Suppose that $w=w_\la w_\mu^{-1}$ lies in $S_n$. If $w$ is decreasing
down to $1$ (respectively, increasing up from $1$), then we say that the
typed $k'$-horizontal strip $\la/\mu$ is a {\em typed $x$-strip}
(respectively, {\em typed $y$-strip}).
\end{defn}

The typed $x$- and typed $y$-strips are characterized among all typed
$k'$-horizontal strips by the next result.

\begin{prop}
A typed $k'$-horizontal strip $\la/\mu$ is a typed $x$-strip
(respectively, typed $y$-strip) if and only if {\em (i)} the left boxes in
$\la/\mu$ form a vertical strip (respectively, horizontal strip), and no two
boxes in $\la/\mu$ are $(k-1)$-related (respectively, no two right boxes in
$\la/\mu$ are in the same row), and {\em (ii)} if $\la/\mu$ is
extremal then $(\type(\la),\type(\mu))\neq (0,0)$ and the following
condition holds: if $\epsilon(\mu)$ is odd, then $\epsilon(\la)$ is
odd {\em and} $\type(\mu)=0$, while if $\epsilon(\mu)$ is even, then
$\epsilon(\la)$ is odd {\em or} $\type(\mu)=1$.
\end{prop}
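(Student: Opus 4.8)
The plan is to mimic the proof of Proposition~\ref{khoriz} from type C, working in the Weyl group $\wt{W}_n$ and its nilCoxeter algebra, and tracking the extra bookkeeping imposed by the \emph{type} invariant. As in the type C case, let $w:=w_\la w_\mu^{-1}$ be the skew element associated to the typed $k'$-horizontal strip $\la/\mu$. Since being decreasing down to $1$ (resp.\ increasing up from $1$) is equivalent to having no reduced word containing $i-1,i$ (resp.\ $i,i-1$) as a subword for $i\geq 2$, I would reduce to the case $w\in\{s_{i-1}s_i,\,s_is_{i-1}\}$ and analyze the associated skew diagram. The first task is to classify, for each $i\geq 1$ (and separately for the special index $i=1$, where $s_0=s_\Box$ behaves differently because of the type D braid relations $\xi_\Box\xi_1=\xi_1\xi_\Box$ and $\xi_\Box\xi_2\xi_\Box=\xi_2\xi_\Box\xi_2$), the possible forms of a $k$-Grassmannian element $w_\nu\in\wt{W}_n$ such that $s_i w_\nu$ is again $k$-Grassmannian with $\ell(s_iw_\nu)>\ell(w_\nu)$, recording in each case whether the reflection adds a left box or a right box and how it affects $\type$ and $\ell_k$. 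This parallels the three-form list in Proposition~\ref{khoriz} but must be augmented by the action near the first coordinate.

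The second task is to translate these local moves into diagram conditions. Part (i) of the statement — that the left boxes form a vertical (resp.\ horizontal) strip and that no two boxes (resp.\ right boxes) are $(k-1)$-related (resp.\ in the same row) — I expect to fall out of exactly the same case analysis as in type C, simply replacing ``$k'$-related'' by ``$(k-1)$-related'' and the type C code data $w_\la(j)$ by its type D analogue from Section~\ref{Getks}. The genuinely new content is part (ii), which governs the \emph{extremal} case, i.e.\ when $(\ell_k(\la),\type(\la))\neq(\ell_k(\mu),\type(\mu))$. Here the skew move involves the special reflection $s_\Box$ or an index interacting with the first column, so the change in $\epsilon(\nu)=\ell_k(\nu)+\type(\nu)$ encodes the parity data. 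I would work out precisely how $s_\Box$ and $s_1$ act on the sign and on the $\pm1$ entry (using the explicit rule that for $\type(\la)=2$ one changes the sign of $w'_1$ and of the entry with absolute value $1$), and read off the parity constraints on $\epsilon(\mu)$ and $\epsilon(\la)$ together with the admissible values of $\type(\mu)$. The condition $(\type(\la),\type(\mu))\neq(0,0)$ in the extremal case should emerge because an extremal strip must cross column $k$ and hence cannot have both endpoints of type $0$.

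The main obstacle will be the bookkeeping of type in the extremal case: unlike types A and C, a single skew move in type D can change $\type$ and can involve the ambiguous pair $(s_1,s_\Box)$ whose commuting relation $\xi_\Box\xi_1=\xi_1\xi_\Box$ means the two corresponding box-additions are genuinely independent, so I must be careful that a reduced word avoiding the subwords $i-1,i$ or $i,i-1$ really does correspond to the claimed diagram condition when $i=2$ and one of the reflections is $s_\Box$. Disentangling the even/odd alternatives in the final sentence — ``if $\epsilon(\mu)$ is even, then $\epsilon(\la)$ is odd \emph{or} $\type(\mu)=1$'' — from the explicit entry-sign computation is where the argument will require the most care. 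My strategy to contain this is to reduce everything to the statement, proved in \cite{T3}, that $w$ is unimodal iff $\la/\mu$ is a typed $k'$-horizontal strip, and then to check the decreasing/increasing refinement one elementary move at a time, so that each parity assertion is verified by a single explicit $s_i$- or $s_\Box$-action rather than by a global argument. Once the local dictionary between moves and diagram conditions is established, concatenating the moves along a reduced word gives both implications, and the proof concludes exactly as in Theorem~\ref{Askew} and Proposition~\ref{khoriz}.
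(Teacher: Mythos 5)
Your overall route is the same as the paper's: classify the left actions of $s_i$ and $s_\Box$ on $k$-Grassmannian elements of $\wt{W}_n$, reduce the decreasing/increasing conditions to length-two elements as in Proposition \ref{khoriz}, and verify the type/parity bookkeeping of part (ii) by explicit single $s_1$- versus $s_\Box$-moves (this is exactly what the paper's Table \ref{mula} records). But one step of your plan is wrong as stated: the claim that the exclusion $(\type(\la),\type(\mu))\neq(0,0)$ ``should emerge because an extremal strip must cross column $k$ and hence cannot have both endpoints of type $0$.'' That implication is false. Take $k=2$, $\mu=(1)$, and $\la=(3,1)$: neither partition has a part equal to $k$, so both are forced to have type $0$; one checks directly that $\la/\mu$ is a typed $k'$-horizontal strip, and it is extremal since $\ell_k(\mu)=0\neq 1=\ell_k(\la)$, even though row $1$ of $\la/\mu$ crosses column $k$. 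So extremal typed $k'$-horizontal strips with both endpoints of type $0$ do exist; what is true is that they are never typed $x$- or $y$-strips, and no diagram-crossing argument will give this.

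The actual mechanism, which is the one the paper uses, is group-theoretic. In the example above, the skew element is $w=w_\la w_\mu^{-1}=(\ov{1},\ov{2},4,3)=s_\Box s_1 s_3\notin S_4$. In general, for an extremal strip (equivalently, $w(1)\neq 1$), having $(\type(\la),\type(\mu))=(0,0)$ is equivalent to some unimodal reduced word of $w$ containing $\Box\,1$ as a subword; since every reduced word of an element of the parabolic subgroup $S_n\subset\wt{W}_n$ avoids $\Box$ altogether, this is impossible when $w\in S_n$, which is what Definition \ref{xyDdef} demands. So the $(0,0)$ exclusion must be derived from reduced words, not from the shape of the strip; your single-move case analysis could recover this, but only if you replace the stated justification. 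A second, smaller point: to get the ``if'' direction of part (ii) you must show that the parity pattern in the statement occurs exactly for the $s_1$-moves and the complementary pattern exactly for the $s_\Box$-moves; the paper does this by comparing $w$ with its conjugate $s_0ws_0$ under the diagram automorphism (which swaps types $1$ and $2$). Your plan of analyzing both kinds of moves contains this implicitly, but the mutual exclusivity of the two parity patterns is what converts the single-move dictionary into an if-and-only-if statement, and it should be made explicit.
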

\begin{proof}
For $i\geq 1$, a $k$-Grassmannian element $s_iv\in \wt{W}_n$ satisfies
$\ell(s_iv)>\ell(v)$ if and only if $v$ has one of the following four
forms:
\[
(\cdots i \cdots i+1 \cdots),\quad (\cdots i+1 \cdots \ov{i} \cdots), \quad 
(\cdots \ov{i} \cdots i+1 \cdots), \quad (\ov{i+1} \cdots \ov{i} \cdots).
\]
Moreover, a $k$-Grassmannian element $s_\Box v$ satisfies $\ell(s_\Box v)>\ell(v)$
if and only if $v$ has one of the following four forms:
\[
(\cdots 1 \cdots 2 \cdots), \quad
(\ov{1} \cdots 2 \cdots), \quad
(2 \cdots 1 \cdots), \quad
(\ov{2} \cdots 1 \cdots).
\]

Let $w:=w_\la w_\mu^{-1}$ be the skew element of $\wt{W}_n$ associated
to $\la/\mu$. It is easy to verify that $\la/\mu$ is extremal if and
only if $w(1)\neq 1$. In this case, we have
$(\type(\la),\type(\mu))\neq(0,0)$ if and only if no unimodal reduced
word for $w$ has $\Box 1$ as a subword. Now suppose that $\la/\mu$ is
extremal, $w$ lies in $S_n$, and $w$ is decreasing down to $1$ or
increasing up from $1$. A case-by-case analysis shows that if
$\epsilon(\mu)$ is odd, then $\epsilon(\la)$ is odd {\em and}
$\type(\mu)=0$, while if $\epsilon(\mu)$ is even, then $\epsilon(\la)$
is odd {\em or} $\type(\mu)=1$. On the other hand, for the skew
element $s_0ws_0$ of shape $\la/\mu$, we observe that if
$\epsilon(\mu)$ is odd, then $\epsilon(\la)$ is even {\em or}
$\type(\mu)=1$, while if $\epsilon(\mu)$ is even, then $\epsilon(\la)$
is even {\em and} $\type(\mu)=0$. Indeed, since $w(1)\neq 1$, we are
reduced to examining what happens when $w_\la=s_1 w_\mu$ and
$w_\la=s_\Box w_\mu$, respectively. The $10 = 2\cdot 5$ different cases
are illustrated in Table \ref{mula} when $k=2$ and $n=4$, and the
picture for other values of $k$ and $n$ follows the same pattern. The
remainder of the argument is similar to the proof of Proposition
\ref{khoriz}.
\end{proof}

{\small{
\begin{table}[t]
\caption{Statistics for the extremal pairs $u$ and $s_1u$, $v$ and $s_\Box v$}
\centering
\begin{tabular}{|c|c|c|c|c||c|c|c|c|c|} \hline
$u$ & $\mu$ & $\ell_k(\mu)$ & type$(\mu)$ & $\epsilon(\mu)$ 
& $s_1u$ & $\la$ & $\ell_k(\la)$ & type$(\la)$ & $\epsilon(\la)$ \\ \hline
$1324$ & 1 & even & 0 & even & $2314$ & 2 & even & 1 & odd \\
$\ov{2}3\ov{1}4$ & 2 & even & 2 & even & $\ov{1}3\ov{2}4$ & 3 & odd & 0 & odd\\
$\ov{3}4\ov{1}2$ & (2,2) & even & 2 & even & $\ov{3}4\ov{2}1$ & (3,2) & odd & 2 & odd\\
$\ov{1}4\ov{3}2$ & (4,1) & odd & 0 & odd & $\ov{2}4\ov{3}1$ & (4,2) & odd & 2 & odd\\
$24\ov{3}\ov{1}$ & (4,2) & odd & 1 & even & $14\ov{3}\ov{2}$ & (4,3) & even & 0 & even \\
\hline
\hline
$v$ & $\mu$ & $\ell_k(\mu)$ & type$(\mu)$ & $\epsilon(\mu)$ 
& $s_{\Box}v$ & $\la$ & $\ell_k(\la)$ & type$(\la)$ & $\epsilon(\la)$ \\ \hline
$1324$ & 1 & even & 0 & even & $\ov{2}3\ov{1}4$ & 2 & even & 2 & even \\
$2314$ & 2 & even & 1 & odd & $\ov{1}3\ov{2}4$ & 3 & odd & 0 & odd \\
$3412$ & (2,2) & even & 1 & odd & $34\ov{2}\ov{1}$ & (3,2) & odd & 1 & even\\
$\ov{1}4\ov{3}2$ & (4,1) & odd & 0 & odd & $24\ov{3}\ov{1}$ & (4,2) & odd & 1 & even\\
$\ov{2}4\ov{3}1$ & (4,2) & odd & 2 & odd & $14\ov{3}\ov{2}$ & (4,3) & even & 0 & even \\
\hline 
\end{tabular}
\label{mula}
\end{table}}}

Let {\bf R} denote the ordered alphabet 
\[
(n-1)'<\cdots 2'<1'< 1,1^\circ < 2,2^\circ < 3,3^\circ < \cdots <1''<2''<\cdots<(n-1)''. 
\]
The single and double primed symbols in {\bf R} are said to be {\em
  marked}, while the rest are {\em unmarked}.  A {\em typed
  $k'$-tableau} $T$ of shape $\la/\mu$ is a sequence of typed
$k$-strict partitions
\[
\mu = \la^0\subset\la^1\subset\cdots\subset\la^p=\la
\]
such that $\la^i/\la^{i-1}$ is a typed $k'$-horizontal strip for
$1\leq i\leq p$. We represent $T$ by a filling of the boxes in
$\la/\mu$ with unmarked symbols of {\bf R} such that for each $i$, the
boxes in $T$ with entry $i$ or $i^\circ$ form the skew diagram
$\la^i/\la^{i-1}$, and we use $i$ (respectively, $i^\circ$) if and only if
$\type(\la^i)\neq 2$ (respectively, $\type(\la^i)=2$), for each $i\in [1,p]$.
For any typed $k'$-tableau $T$ we define $n(T):=\sum_i
n'(\la^i/\la^{i-1})$ and set $z^T:=\prod_i z_i^{n_i}$, 
where $n_i$ denotes the number of times that $i$ or $i^\circ$ appears in $T$.
According to \cite[Thm.\ 4]{T3}, the type D Stanley function $E_w(Z)$
satisfies the equation
\begin{equation}
\label{Etabeq}
E_w(Z) = \sum_T 2^{n(T)} z^T
\end{equation}
summed over all typed $k'$-tableaux $T$ of shape $\la/\mu$.

\begin{defn}
\label{Dtrit}
A {\em typed $k'$-tritableau} $U$ of shape $\la/\mu$ is a filling of
the boxes in $\la/\mu$ with elements of {\bf R} which is weakly
increasing along each row and down each column, such that (i) for each
$a$ in {\bf R}, the boxes in $\la/\mu$ with entry $a$ form a typed
$k'$-horizontal strip, which is a typed $x$-strip (respectively, typed
$y$-strip) if $a\in [(n-1)', 1']$ (respectively, $a\in [1'',(n-1)'']$) and
non-extremal if $a\leq 2'$ (respectively, $a\geq 2''$), (ii) the unmarked
entries of $U$ form a typed $k'$-tableau $T$, and (iii) for $1\leq i
\leq \ell_k(\mu)$ (respectively, $1\leq i \leq \ell_k(\la)$) and $1\leq j
\leq k$, the entries of $U$ in row $i$ are $\geq (\mu_i-k+1)'$
(respectively, $\leq (\la_i-k)''$) and the entries in column $k+1-j$ lie in
the interval $[|w_\mu(j)|', |w_\la(j)-1|'']$. Let
\[
n(U):=n(T) \quad \mathrm{and} \quad
(xyz)^U:= z^T\prod_ix_i^{n'_i}\prod_i (-y_i)^{n''_i} 
\]
where $n'_i$ and $n''_i$ denote the number of times that $i'$ and $i''$
appear in $U$, respectively.
\end{defn}

\begin{thm}
\label{Dskew} For the skew element $w:=w_{\la}w_{\mu}^{-1}$, we have
\begin{equation}
\label{DSteq}
\DS_w(Z;X,Y)=\sum_U 2^{n(U)}(xyz)^U
\end{equation}
summed over all typed $k'$-tritableaux $U$ of shape $\la/\mu$.
\end{thm}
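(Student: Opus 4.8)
The plan is to mirror the structure of the proof of Theorem~\ref{Cskew} exactly, replacing type~C ingredients by their type~D counterparts. First I would expand formula (\ref{dbleD}) for $\DS_w(Z;X,Y)$ in the same way as (\ref{CSw}), writing
\[
\DS_w(Z;X,Y)=\sum (-y_{n-1})^{\ell(v_{n-1})}\cdots(-y_1)^{\ell(v_1)}E_\sigma(Z)\,x_1^{\ell(u_1)}\cdots x_{n-1}^{\ell(u_{n-1})},
\]
where the sum runs over all reduced factorizations $v_{n-1}\cdots v_1\,\sigma\,u_1\cdots u_{n-1}=w$ with each $v_p\in S_n$ increasing up from $p$ and each $u_p\in S_n$ decreasing down to $p$, and $\sigma\in\wt W_\infty$ is the remaining factor. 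The factor $\sigma$ carries the type~D Stanley function $E_\sigma(Z)$, for which I may invoke the tableau formula (\ref{Etabeq}).

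Next I would translate these factorizations into chains of typed $k$-strict partitions
\[
\mu=\la^0\subset\la^1\subset\cdots\subset\la^{n-1}\subset\la^n\subset\cdots\subset\la^{2n-1}=\la
\]
with $\la^i/\la^{i-1}$ a typed $x$-strip for $i\le n-1$, a typed $y$-strip for $i\ge n+1$, and $(\la^n,\la^{n-1})$ a compatible pair, using the 1-1 correspondence of \cite[Cor.\ 2]{T3}. I then build a filling $U$ of $\la/\mu$ by placing $(n-i)'$ in $\la^i/\la^{i-1}$ for $1\le i\le n-1$, extending the typed $k'$-tableau $T$ from $E_\sigma$ on $\la^n/\la^{n-1}$ via marks $i$ or $i^\circ$ according to whether $\type(\la^i)\neq 2$ or $=2$, and placing $(i-n)''$ for $n+1\le i\le 2n-1$. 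The row-interval and column-interval constraints of Definition \ref{Dtrit} follow exactly as in the type~C argument, by analyzing the left action of each $s_a$ on the $k$-Grassmannian elements going from $w_\mu$ to $w_\la$: a left-box move sends $w_\nu(j)\mapsto w_\nu(j)+1$ for a single $j$, giving the row bounds $\ge(\mu_i-k+1)'$ and $\le(\la_i-k)''$, while a right-box move decreases at most one of $|w_\nu(1)|,\ldots,w_\nu(k)$ by $1$, giving the column interval $[|w_\mu(j)|',|w_\la(j)-1|'']$.

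The genuinely type~D feature, and the step I expect to be the main obstacle, is condition (i) of Definition \ref{Dtrit} involving the type data: I must verify that the extremal versus non-extremal distinction is tracked correctly, namely that each single-valued block $a\le 2'$ (resp.\ $a\ge 2''$) is forced to be non-extremal, so that the $\type(\la^i)$ assignments in the chain are consistent and the $i$-versus-$i^\circ$ marking of $T$ is well defined. Here I would appeal to the preceding Proposition characterizing typed $x$- and $y$-strips, which encodes exactly when an extremal strip can occur as a decreasing or increasing factor; the parity bookkeeping in that Proposition (the conditions on $\epsilon(\la),\epsilon(\mu),\type(\mu)$, illustrated in Table~\ref{mula}) guarantees that at most one extremal step occurs among the $x$-strips and among the $y$-strips, pinning it to the boundary symbol $1'$ (resp.\ $1''$). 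Once this is in place, I conclude as before: every such filling is a typed $k'$-tritableau with $(xyz)^U$ equal to the corresponding monomial, the correspondence is reversible, and combining the expansion with (\ref{Etabeq}) yields (\ref{DSteq}).
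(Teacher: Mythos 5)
Your proposal follows the paper's proof of Theorem \ref{Dskew} step for step: the same expansion of (\ref{dbleD}), the same translation of reduced factorizations $v_{n-1}\cdots v_1\tau u_1\cdots u_{n-1}$ into chains of typed $k$-strict partitions via \cite[Cor.\ 2]{T3}, the same construction of the filling $U$, and the same conclusion by combining with (\ref{Etabeq}). However, the step you yourself single out as the crux --- the non-extremality requirement in Definition \ref{Dtrit}(i) for entries $a\leq 2'$ and $a\geq 2''$ --- is justified by an appeal that does not work. The parity conditions in the type D analogue of Proposition \ref{khoriz} (the ones on $\epsilon(\la)$, $\epsilon(\mu)$, $\type(\mu)$ illustrated in Table \ref{mula}) are intrinsic conditions on a \emph{single} strip: they characterize which extremal strips can be typed $x$- or $y$-strips at all. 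They say nothing about where in a chain an extremal strip may sit, and in particular they cannot distinguish the entry $2'$ from the entry $1'$; so they do not ``guarantee that at most one extremal step occurs, pinned to the boundary symbol.'' What actually pins it down is the factorization structure: the factor $u_p$ (resp.\ $v_p$) carrying the entry $p'$ (resp.\ $p''$) is decreasing down to (resp.\ increasing up from) $p$, so for $p\geq 2$ its reduced word uses only reflections $s_a$ with $a\geq p\geq 2$, whence $u_p(1)=1$. Combined with the criterion stated in the proof of that proposition --- a typed $k'$-horizontal strip is extremal if and only if its skew element $w$ satisfies $w(1)\neq 1$ --- this shows the strips with entries $p'$, $p''$, $p\geq 2$, are automatically non-extremal; only $u_1$ and $v_1$ can move the value $1$, which is exactly why Definition \ref{Dtrit} exempts $1'$ and $1''$. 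This missing observation is the genuinely type D ingredient, and without it your argument for condition (i) would stall.

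A secondary slip: your attributions of the interval bounds are crossed relative to how the argument runs. The left-box moves, which raise a single value $w_\nu(j)$ with $j\leq k$, are what produce the \emph{column} intervals $[|w_\mu(j)|',|w_\la(j)-1|'']$ (using, as in the type C proof, the observation that a right-box move can decrease at most one of $w_\nu(1),\ldots,w_\nu(k)$ by $1$); the \emph{row} bounds $\geq(\mu_i-k+1)'$ and $\leq(\la_i-k)''$ for $i\leq\ell_k$ come instead from the right-box moves $\ov{a}\mapsto\ov{a+1}$, since $|w_\nu(k+i)|=\nu_i-k+1$ in type D. Neither issue requires a new idea beyond what is already in the paper to repair, but as written the two justifications you give for the type-D-specific constraints are not the ones that make the proof go through.
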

\begin{proof}
We deduce from formula (\ref{dbleD}) that
\begin{equation}
\label{DSw}
\DS_w(Z;X,Y) 
= \sum (-y_{n-1})^{\ell(v_{n-1})}\cdots (-y_1)^{\ell(v_1)}
E_\tau(Z) x_1^{\ell(u_1)}\cdots x_{n-1}^{\ell(u_{n-1})}
\end{equation}
where the sum is over all reduced factorizations $v_{n-1}\cdots v_1 \tau
u_1\cdots u_{n-1}$ of $w$ such that $v_p\in S_n$ is increasing up from $p$ and $u_p\in S_n$
is decreasing down to $p$ for each $p$.
Such factorizations correspond to sequences of typed $k$-strict partitions
\[
\mu = \la^{0} \subset \la^1 \subset \cdots \subset \la^{n-1}
\subset \la^n \subset \la^{n+1} \subset \cdots \subset \la^{2n-1} =\la
\]
with $\la^i/\la^{i-1}$ a typed $x$-strip for each $i\leq n-1$, a typed
$y$-strip for each $i\geq n+1$, and $(\la^n,\la^{n-1})$ a compatible
pair.  We extend each typed $k'$-tableau $T$ on $\la^n/\la^{n-1}$ to a
filling $U$ of the boxes in $\la/\mu$ by placing the entry $(n-i)'$ in
each box of $\la^i/\la^{i-1}$ for $1\leq i\leq n-1$ and $(i-n)''$ in
each box of $\la^i/\la^{i-1}$ for $n+1\leq i\leq 2n-1$. As in the
proof of Theorem \ref{Cskew}, referring this time to Section
\ref{Getks}, one checks that the marked entries of $U$ are restricted
in accordance with Definition \ref{Dtrit}. We deduce that every such
filling $U$ of $\la/\mu$ is a typed $k'$-tritableau on $\la/\mu$ such
that $(xyz)^U=(-y_{n-1})^{\ell(v_{n-1})}\cdots
(-y_1)^{\ell(v_1)}z^Tx_1^{\ell(u_1)}\cdots
x_{n-1}^{\ell(u_{n-1})}$. Conversely, the typed $k'$-tritableaux of
shape $\la/\mu$ correspond to reduced factorizations of $w$ of the
required form.  Finally, by combining (\ref{DSw}) with (\ref{Etabeq}),
we obtain (\ref{DSteq}).
\end{proof}

\begin{example} 
Following \cite{T3}, for any typed $k$-strict partition $\la$, there is a double eta
polynomial $\Eta_\la(c\, |\, t)$, whose image $\Eta_\la(Z; X,Y)$ in
the ring of type D Schubert polynomials is equal to the Grassmannian Schubert
polynomial $\DS_{w_\la}(Z; X,Y)$. Formula (\ref{DSteq}) therefore
gives
\begin{equation}
\label{Eteq}
\Eta_\la(Z;X,Y)=\sum_U 2^{n(U)}(xyz)^U
\end{equation}
summed over all typed $k'$-tritableaux $U$ of shape $\la$. Equation
(\ref{Eteq}) extends \cite[Thm.\ 3]{T3} from single to double eta
polynomials.
\end{example}

\begin{example} 
We extend \cite[Example 2]{T3} to include typed $k'$-tritableaux. Let
$k:=1$, $\la:=(3,1)$ of type $1$, and $Z_2:=(z_1,z_2)$. We have
$w_\la=(2,\ov{3},\ov{1})$ and will compute $\Eta_{(3,1)}(Z_2;X,Y)=
\DS_{2\ov{3}\ov{1}}(Z_2;X,Y)$. Consider the alphabet $\text{\bf
  R}_{1,2}=\{2'<1'<1,1^\circ < 2, 2^\circ <1''<2''\}$. The thirteen
typed $1'$-tritableaux of shape $\la$ with entries in
$\{2',1',1,1^\circ, 2, 2^\circ\}$ are listed in loc.\ cit.  There are
fourteen further $1'$-tritableaux of shape $\la$ involved. The two
tritableaux $\dis \begin{array}{l} 1'\,2\,1'' \\ 1 \end{array}$ and
$\dis \begin{array}{l} 1'\,2\,2'' \\ 1 \end{array}$ satisfy $n(U)=1$,
while the twelve tritableaux
\begin{gather*}
\begin{array}{l} 1\,2\,1'' \\ 1 \end{array}  \ \ \
\begin{array}{l} 1\,2\,2'' \\ 1 \end{array}  \ \ \
\begin{array}{l} 1\,2\,1'' \\ 2 \end{array}  \ \ \
\begin{array}{l} 1\,2\,2'' \\ 2 \end{array}  \ \ \
\begin{array}{l} 1'\,1\,1'' \\ 1 \end{array}  \ \ \
\begin{array}{l} 1'\,1\,2'' \\ 1 \end{array}  \\
\begin{array}{l} 1'\,2\,1'' \\ 2 \end{array}  \ \ \
\begin{array}{l} 1'\,2\,2'' \\ 2 \end{array}  \ \ \
\begin{array}{l} 1'\,1\,1'' \\ 1' \end{array}  \ \ \
\begin{array}{l} 1'\,1\,2'' \\ 1' \end{array}  \ \ \
\begin{array}{l} 1'\,2\,1'' \\ 1' \end{array}  \ \ \
\begin{array}{l} 1'\,2\,2'' \\ 1' \end{array}  
\end{gather*}
satisfy $n(U)=0$.  We deduce from \cite[Example 2]{T3} and Theorem
\ref{Dskew} that
\begin{align*}
\Eta_{3,1}(Z_2; X, Y) 
&= \Eta_{3,1}(Z_2; X) -(y_1+y_2)(z_1^2z_2+z_1z_2^2) \\
&\qquad - (y_1+y_2)(z_1^2+2z_1z_2+z_2^2)x_1 -(y_1+y_2)(z_1+z_2)x_1^2 \\
&= \Eta_{3,1}(Z_2; X) - (y_1+y_2)\Eta_{2,1}(Z_2; X).
\end{align*}
For $\la:=(3,1)$ with $\type(\la)=2$, we have
$w_\la=(\ov{2},\ov{3},1)$ and will compute $\Eta'_{(3,1)}(Z_2;X,Y)=
\DS_{\ov{2}\ov{3}1}(Z_2;X,Y)$. Here, as in op.\ cit., the prime in
$\Eta'_{(3,1)}$ indicates that the indexing partition has type 2.  It
is shown in \cite{T3} that in this case there are six typed
$1'$-tritableaux of shape $\la$ with entries in $\{2',1',1,1^\circ, 2,
2^\circ\}$.  There are 23 further $1'$-tritableaux of the same shape
$\la$. The three tritableaux
\[
\begin{array}{l} 1\,1\,2 \\ 1'' \end{array}  \ \ \ \
\begin{array}{l} 1'\,1\,2 \\ 1'' \end{array}  \ \ \ \
\begin{array}{l} 1^\circ\,1''\,2'' \\ 2^\circ \end{array}
\]
satisfy $n(U)=1$, while the twenty tritableaux 
\begin{gather*}
\begin{array}{l} 1\,1\,1 \\ 1'' \end{array}  \ \ \ 
\begin{array}{l} 2\,2\,2 \\ 1'' \end{array}  \ \ \
\begin{array}{l} 1\,2\,2 \\ 1'' \end{array}  \ \ \
\begin{array}{l} 1^\circ\,2\,2 \\ 1'' \end{array} \ \ \
\begin{array}{l} 1'\,1\,1 \\ 1'' \end{array}  \ \ \
\begin{array}{l} 1'\,2\,2 \\ 1'' \end{array}  \ \ \ 
\begin{array}{l} 1\,1\,1'' \\ 1'' \end{array} \ \ \
\begin{array}{l} 2\,2\,1'' \\ 1'' \end{array}  \\
\begin{array}{l} 1\,2\,1'' \\ 1'' \end{array}  \ \ \ \
\begin{array}{l} 1^\circ\,2\,1'' \\ 1'' \end{array} \ \ \ \
\begin{array}{l} 1'\,1\,1'' \\ 1'' \end{array}  \ \ \ \
\begin{array}{l} 1'\,2\,1'' \\ 1'' \end{array}  \ \ \ \
\begin{array}{l} 1^\circ\,2^\circ\,1'' \\ 1^\circ \end{array}  \ \ \ \
\begin{array}{l} 1^\circ\,2^\circ\,2'' \\ 1^\circ \end{array}  \\
\begin{array}{l} 1^\circ\,2^\circ\,1'' \\ 2^\circ \end{array}  \ \ \
\begin{array}{l} 1^\circ\,2^\circ\,2'' \\ 2^\circ \end{array}  \ \ \
\begin{array}{l} 1^\circ\,1''\,2'' \\ 1^\circ \end{array}  \ \ \
\begin{array}{l} 2^\circ\,1''\,2'' \\ 2^\circ \end{array}  \ \ \
\begin{array}{l} 1^\circ\,1''\,2'' \\ 1'' \end{array}  \ \ \
\begin{array}{l} 2^\circ\,1''\,2'' \\ 1'' \end{array}  
\end{gather*}
satisfy $n(U)=0$. We deduce from \cite[Example 2]{T3} and Theorem
\ref{Dskew} that
\begin{align*}
\Eta'_{3,1}(Z_2; X, Y) 
&= \Eta'_{3,1}(Z_2; X) -y_1(z_1^3 + 2z_1^2z_2+2z_1z_2^2+z_2^3+(z_1^2+2z_1z_2+z_2^2)x_1) \\
&\qquad - (y_1+y_2)(z_1^2z_2+z_1z_2^2)+y_1^2(z^2_1+2z_1z_2+z^2_2+(z_1+z_2)x_1)  \\
&\qquad + y_1y_2(z_1^2+2z_1z_2+z_2^2)-y_1^2y_2(z_1+z_2)  \\
&= \Eta'_{3,1}(Z_2; X) -y_1\Eta_3(Z_2;X)-(y_1+y_2)\Eta'_{2,1}(Z_2; X) \\
&\qquad + y_1^2\Eta_2(Z_2;X) + y_1y_2\Eta'_{1,1}(Z_2;X) -y_1^2y_2\Eta_1'(Z_2;X).
\end{align*}
The $y$-factors in the last equality are exactly the type A single Schubert polynomials
$\AS_\om(-Y)$ for $\om\in S_3$. Since $w_{\la} = s_1s_2s_1s_\Box$, this is in agreement with
\cite[Cor.\ 1]{T5}.
\end{example}

\begin{example}
Let $w:=w_{\la}w_{\mu}^{-1}$ be a skew element of $W_\infty$ or
$\wt{W}_\infty$. Extend the alphabets {\bf Q} and {\bf R} to include
all primed and double primed positive integers, omit the bounds on the
entries of the tritableaux found in Definitions \ref{Ctrit} and
\ref{Dtrit}, and the non-extremal condition in the latter. Then the
right hand sides of equations (\ref{CSteq}) and (\ref{DSteq}) give
tableau formulas for the type C {\em double mixed Stanley function}
$J_w(Z;X,Y)$ of \cite[Ex.\ 3]{T2} and its type D analogue
$I_w(Z;X,Y)$, respectively.
\end{example}

\end{document}